\tikzstyle{vertex} = [fill,shape=circle,node distance=80pt]
\tikzstyle{edge} = [opacity=0.4,fill opacity=0.0,line cap=round, line join=round, line width=40pt]
\tikzstyle{elabel} =  [fill,shape=circle,node distance=30pt]
\theoremstyle{plain}
\newtheorem{theorem}{Theorem}[section]		
\newtheorem{lemma}[theorem]{Lemma}
\newtheorem{conjecture}[theorem]{Conjecture}
\theoremstyle{remark}
\newcommand{\e}{\ensuremath{\varepsilon}}
\newcommand{\hide}[1]{}
\let\originalleft\left
\let\originalright\right
\renewcommand{\left}{\mathopen{}\mathclose\bgroup\originalleft}
\renewcommand{\right}{\aftergroup\egroup\originalright}
\def\imod#1{\allowbreak\mkern10mu({\operator@font mod}\,\,#1)}
\title{Restricted partitions of graphs with bounded clique number}
\author{Ant\'onio Gir\~ao}
\author{Toby Insley}
\begin{document}

\title{Sparse Partitions of Graphs with Bounded Clique Number}

\maketitle
\begin{abstract}
    \noindent We prove that for each integer $r\ge 2,$ there exists a constant $C_r>0$ with the following property: for any $0<\e\le 1/2$ and any graph $G$ with clique number at most $r,$ there is a partition of $V(G)$ into at most $(1/\e)^{C_r}$ sets $S_1, \dots, S_t,$ such that $G[S_i]$ has maximum degree at most $\e|S_i|$ for each $1 \le i \le t.$ This answers a question of Fox, Nguyen, Scott and Seymour, who proved a similar result for graphs with no induced $P_4.$
\end{abstract}

\section{Introduction}

Given graphs $G$ and $H,$ as usual we say that $G$ is \textit{$H$-free} if $G$ contains no induced subgraph isomorphic to $H.$ What can be said about the structure of such graphs $G?$ The most famous open problem in this regard is a conjecture of Erdős and Hajnal~\cite{ErdosHajnal77} from 1977.

\begin{conjecture}\label{E-H conjecture}
    For every graph $H,$ there exists a constant $c_H>0$ such that any $H$-free graph $G$ contains either a clique or a stable set of size at least $|G|^{c_H}.$ 
\end{conjecture}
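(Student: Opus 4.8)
The plan is to attack Conjecture \ref{E-H conjecture} along the three–step programme that underlies every known partial result, with the present paper's theorem as the model case.

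\emph{Step 1: reduce to prime $H$.} By the substitution lemma of Alon, Pach and Solymosi, the class of graphs $H$ satisfying the Erd\H{o}s--Hajnal property is closed under substitution, i.e.\ under replacing a vertex of $H$ by another graph from the class. Since every graph is obtained from its prime induced subgraphs by iterated substitution, it suffices to prove the conjecture when $H$ is prime (has no non-trivial module). Primality is what makes the later steps plausible: a prime $H$ has no large homogeneous set of its own, so one may hope to ``read off'' a copy of $H$ from any sufficiently regular region of an $H$-free graph.

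\emph{Step 2: reduce to a polynomial structural theorem extending this paper.} The target is the polynomial R\"odl property: for every prime $H$ there is $C_H>0$ such that for every $\eps\in(0,1/2)$, every $H$-free graph $G$ on $n$ vertices has an induced subgraph on at least $\eps^{C_H}n$ vertices whose edge density is less than $\eps$ or greater than $1-\eps$. The natural route to this is the stronger \emph{sparse partition} statement --- $V(G)$ partitions into at most $(1/\eps)^{C_H}$ parts, each inducing, in $G$ or in its complement, maximum degree at most $\eps$ times its size --- which is precisely the theorem of this paper with the bounded-clique-number hypothesis deleted (the alternative ``in its complement'' is forced, since a clique is $H$-free and cannot be cut into few sparse pieces). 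Polynomial structural control of this kind is exactly what the conjecture asks for: the sparse/dense dichotomy is converted, by recursing into a suitable part while driving the error parameter down, into an actual clique or stable set of size $n^{\Omega_H(1)}$, giving Conjecture \ref{E-H conjecture}.

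\emph{Step 3 and the main obstacle.} Step 2 would be attempted by induction on $|H|$: Step 1 disposes of decomposable $H$, so one is left with prime $H$; one would then run the present paper's machinery on the low-clique regions of $G$, and on a region $U$ where $G[U]$ is $\eps$-dense pass to the complement and try to show that complements of dense induced subgraphs of $H$-free graphs are, after discarding a small fraction of vertices, governed by a bounded parameter --- thereby reducing the general case to the bounded-clique case this paper already settles. This passage from bounded to unbounded clique number is the hard part, and it is where the conjecture actually stands open: there is no known way to control the clique structure of an arbitrary $H$-free graph with polynomial bounds, and the polynomial R\"odl conjecture itself is unresolved in general (it is verified only sporadically --- for cographs, all graphs on at most four vertices, the bull, $C_5$, and a handful of others, each by a bespoke analysis of a prime $H$). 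I expect any complete proof to require a genuinely new structural theorem for $H$-free graphs supplying this reduction; absent it, the argument degrades to the special cases already known.
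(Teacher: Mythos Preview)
The statement you are addressing is Conjecture~\ref{E-H conjecture}, the Erd\H{o}s--Hajnal conjecture. The paper does not prove it; it is stated as an open problem, and the paper's main result concerns only the very special case $H=K_{r+1}$ (where, moreover, the conjecture itself is trivial by Ramsey's theorem --- the paper's contribution is the much stronger polynomial partition statement). So there is no ``paper's own proof'' to compare your proposal against.

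Your proposal is not a proof but a research programme, and you say so yourself in Step~3: the reduction from general $H$ to bounded clique number is exactly the missing piece, and you correctly note that no such reduction is known. That is an honest assessment of the state of the art, but it means nothing here can be called a proof of the conjecture.

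One further point: your Step~2 is circular as a reduction. The paper records the theorem of Buci\'c, Fox and Pham that the Erd\H{o}s--Hajnal property and the polynomial R\"odl property are \emph{equivalent}. Thus ``reducing'' Conjecture~\ref{E-H conjecture} to the polynomial R\"odl conjecture (or to its sparse-partition strengthening) is not a reduction in difficulty at all --- it is a restatement. The substitution step (Step~1) is genuine and standard, but after it you are still facing the full conjecture for prime $H$, and nothing in the present paper's machinery for $K_{r+1}$-free graphs suggests how to handle a general prime $H$.
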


We say that any graph $H$ satisfying Conjecture \ref{E-H conjecture} has the \textit{Erdős-Hajnal property}. We remark that Erdős and Szekeres~\cite{Erdös1935} showed that all graphs $G$ contain either a clique or a stable set of size at least $\frac{1}{2}\log |G|,$ and Erdős~\cite{RemarkOnTheoryOfGraphs} further showed that almost all graphs $G$ contain neither a clique nor a stable set of size greater than $(2+o(1))\log|G|$ (all logarithms in this paper are to the base 2). In light of this, Conjecture \ref{E-H conjecture} would indicate that $H$-free graphs have a highly atypical structure.

Conjecture \ref{E-H conjecture} has received a lot of attention over the years, having now been verified in a good number of nontrivial cases. A natural variation of this problem asks the following: to what degree do $H$-free graphs contain large regions which are either very `sparse' or very `dense'? Given a graph $G$ and any $\epsilon>0,$ say that a set $S\subseteq V(G)$ is \textit{weakly \textit{$\epsilon$-restricted}} if one of $G[S],\overline{G}[S]$ contains at most $\epsilon|S|^2$ edges. Rödl~\cite{Rodl} showed that $H$-free graphs contain linear-sized such sets.

\begin{theorem}[Rödl]\label{Rodl, weak}
    For every graph $H$ and any $\epsilon>0,$ there exists $\delta>0$ with the following property: if $G$ is an $H$-free graph, then there is some subset $S\subseteq V(G)$ with $|S| \ge \delta |G|,$ such that $S$ is weakly $\epsilon$-restricted.
\end{theorem}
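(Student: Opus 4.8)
The plan is to derive Rödl's theorem from Szemerédi's regularity lemma. Two ideas drive the proof. First, a \emph{bounded} number $s_0 = s_0(\varepsilon)$ of clusters that are pairwise regular and whose pairwise densities are either all below some small $\eta \approx \varepsilon$, or all above $1-\eta$, already spans a weakly $\varepsilon$-restricted set. Second, if $G$ is $H$-free then no $h := |V(H)|$ clusters can be pairwise regular with all pairwise densities in $[\eta, 1-\eta]$, by a routine induced-embedding argument. The real obstacle is the first idea: the naive hope of producing a \emph{linear}-sized such family of clusters is simply false (the reduced graph can behave pseudorandomly), and the point is to observe that because $\delta$ is allowed to depend on $\varepsilon$ and $H$ — and may be astronomically small — a family of only $s_0 = O(1/\varepsilon)$ clusters is enough, which is exactly what makes the merely logarithmic guarantees of Ramsey's theorem sufficient here.

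For the setup I would first dispose of the trivial cases: if $|V(G)|$ lies below a constant $n_0$, a single vertex is a weakly $\varepsilon$-restricted set of size $1 \ge \delta n$ provided $\delta \le 1/n_0$, and $|V(H)| \le 2$ is immediate; so assume $n := |V(G)|$ is large and $h := |V(H)| \ge 3$. Fix $\eta := \varepsilon/10$, $s_0 := \lceil 10/\varepsilon \rceil$, and let $R := R(s_0, s_0, h)$ be the Ramsey number (the least $N$ such that every $3$-colouring of $K_N$ contains a clique of size $s_0$ in colour $1$ or in colour $2$, or a clique of size $h$ in colour $3$). Choose $\varepsilon' > 0$ small enough in terms of $\eta$ and $h$ (as dictated by the embedding step below) and with $\varepsilon' \le 1/(20R)$, and apply the regularity lemma with parameter $\varepsilon'$ and at least $10R$ parts: this partitions all but $\le \varepsilon' n$ vertices of $G$ into clusters $V_1, \dots, V_k$ of near-equal size $m$ with $km \ge (1-\varepsilon')n$, where $k \le K_0$ for a constant $K_0 = K_0(\varepsilon, H)$, and at most $\varepsilon' k^2$ of the pairs $(V_i, V_j)$ are irregular.

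The embedding step is the claim: if $V_{i_1}, \dots, V_{i_h}$ are pairwise $\varepsilon'$-regular with all $\binom h2$ pairwise densities in $[\eta, 1-\eta]$, then (for $n$ large) $G$ contains an induced copy of every graph on $h$ vertices, in particular of $H$. I would prove this by the standard greedy embedding: process the vertices of $H$ one at a time, keeping for each not-yet-embedded vertex a candidate set inside its designated cluster, and at each step intersecting with the neighbourhood of the vertex just placed (when the corresponding pair is an edge of $H$) or removing that neighbourhood (when it is a non-edge); since the relevant pairs are regular with density bounded away from $0$ and $1$, every candidate set shrinks by a factor of at least $\eta - \varepsilon'$ per step, so all remain nonempty once $\varepsilon' \ll (\eta/2)^h/h$. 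As $G$ is $H$-free, no such $h$ clusters exist.

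For the extraction: since at most $\varepsilon' k^2$ pairs are irregular, a greedy argument on the irregularity graph (which has average degree $\le 2\varepsilon' k$) produces a set $Q$ of pairwise-$\varepsilon'$-regular clusters with $|Q| \ge k/(2\varepsilon' k + 1) \ge R$ (using $k \ge 10R$ and $\varepsilon' \le 1/(20R)$). Colour each pair inside $Q$ \emph{sparse}, \emph{dense}, or \emph{medium} according as its density is $< \eta$, $> 1-\eta$, or in $[\eta, 1-\eta]$; by the embedding step $Q$ contains no medium clique on $h$ clusters, so since $|Q| \ge R(s_0, s_0, h)$, Ramsey's theorem yields a sparse clique or a dense clique on $s_0$ clusters. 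Let $S$ be the union of those $s_0$ clusters, so $|S| = s_0 m \ge \tfrac{s_0(1-\varepsilon')}{K_0}\, n$. In the sparse case $e(G[S]) \le s_0\binom m2 + \binom{s_0}2 \eta m^2 \le \tfrac12 s_0^2 m^2\big(\tfrac1{s_0} + \eta\big) \le \tfrac{\varepsilon}{10}|S|^2 < \varepsilon|S|^2$, using $1/s_0 \le \varepsilon/10 = \eta$; in the dense case the same bound holds for $e(\overline G[S])$. Hence $S$ is weakly $\varepsilon$-restricted, and $\delta := \min\{1/n_0,\ s_0/(2K_0)\}$ works. The one genuinely technical ingredient is the induced embedding lemma, which is nonetheless entirely standard; everything else is bookkeeping once the bounded-extraction observation above is in place.
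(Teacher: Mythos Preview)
The paper does not prove this theorem; it is quoted as a known result of R\"odl, with a citation, and serves only as background motivation for the questions the paper actually addresses. So there is no ``paper's own proof'' to compare against.

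That said, your argument is correct and is essentially R\"odl's original proof: regularity lemma, then an induced embedding lemma to forbid $h$ pairwise-regular clusters of medium density, then Ramsey on a family of pairwise-regular clusters to extract $s_0$ clusters whose pairwise densities are all below $\eta$ or all above $1-\eta$, and finally the edge count on their union. The bookkeeping checks out: the independent-set estimate $k/(2\varepsilon' k+1)\ge R$ holds under your constraints $k\ge 10R$ and $\varepsilon'\le 1/(20R)$; the edge bound $s_0\binom{m}{2}+\binom{s_0}{2}\eta m^2 \le \tfrac{\varepsilon}{10}|S|^2$ is fine with $1/s_0\le \varepsilon/10=\eta$; and the greedy induced embedding goes through once $\varepsilon'$ is small compared to $(\eta/2)^h/h$, exactly as you say. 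The handling of small $n$ via $\delta\le 1/n_0$ is also correct. In short: this is the standard proof, done carefully.
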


For a fixed graph $H,$ how large can we take $\delta$ as a function of $\epsilon?$ Fox and Sudakov~\cite{fox-sudakov} conjectured the following polynomial bound.

\begin{conjecture}\label{Fox-Sudakov conjecture}
    For every graph $H,$ there exists a constant $C_H>0$ with the following property: for all $0 < \epsilon \le 1/2$ and any $H$-free graph $G,$ there is some weakly $\e$-restricted set $S\subseteq V(G)$ with $|S| \ge \epsilon^{C_H}|G|.$
\end{conjecture}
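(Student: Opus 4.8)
We do not resolve Conjecture~\ref{Fox-Sudakov conjecture} for all $H$; instead we establish it whenever $H$ is a complete graph, equivalently for all graphs $G$ of bounded clique number, and in fact in the following stronger ``partition'' form, which is the main theorem of this paper: \emph{for every integer $r\ge 2$ there is a constant $C_r>0$ such that every graph $G$ with $\omega(G)\le r$ admits a partition $V(G)=S_1\cup\cdots\cup S_t$ with $t\le(1/\e)^{C_r}$ and $\Delta(G[S_i])\le\e|S_i|$ for every $i$.} Conjecture~\ref{Fox-Sudakov conjecture} for $H=K_{r+1}$ is then immediate: writing $n=|G|$, by averaging some part $S_i$ has $|S_i|\ge n/t\ge\e^{C_r}n$, and as every vertex of $G[S_i]$ has at most $\e|S_i|$ neighbours inside $S_i$ we get $e(G[S_i])\le\tfrac12\e|S_i|^2<\e|S_i|^2$, so $S_i$ is weakly $\e$-restricted; thus one may take $C_{K_{r+1}}=C_r$.

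It is instructive to note first that the single-set statement alone — as opposed to the full partition — admits a short proof by induction on $r$: if $\Delta(G)\le\e n$ then $S=V(G)$ already satisfies $e(G)\le\tfrac12\e n^2<\e n^2$; otherwise a vertex $v$ of maximum degree has $|N_G(v)|>\e n$, the graph $G[N_G(v)]$ is $K_r$-free, and the inductive hypothesis produces a weakly $\e$-restricted $S\subseteq N_G(v)$ with $|S|\ge\e^{C_{r-1}}|N_G(v)|>\e^{C_{r-1}+1}n$, giving $C_{K_{r+1}}=r-1$. The real difficulty, and what answers the question of Fox, Nguyen, Scott and Seymour, is to partition \emph{all} of $V(G)$ into few sparse parts.

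For the partition statement I would again induct on $r$, the case $r=1$ (an edgeless graph, one part) being trivial. In the step, let $G$ be $K_{r+1}$-free on $n$ vertices; the difficulty is concentrated on the set $B$ of vertices of degree greater than $\e n$, since for each $v\in B$ the neighbourhood $G[N_G(v)]$ is $K_r$-free and hence, by induction, splits into at most $(1/\e)^{C_{r-1}}$ parts of small relative maximum degree. The crux is to \emph{glue} these neighbourhood-partitions into a single partition of $B$ (and then of $V(G)$) using only $\poly(1/\e)$ parts: I would select a bounded family $v_1,\dots,v_k\in B$ whose neighbourhoods cover $B$ in a suitably weighted sense (via linear-programming duality / a fractional-covering argument, or dependent random choice), assign each vertex of $B$ to one such $N_G(v_j)$, refine according to the inductive partition of that neighbourhood, and fold in the low-degree leftover. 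A final clean-up is needed because a greedy ``defective-colouring'' merge only bounds the \emph{average} degree inside a part; one boosts this to a maximum-degree bound by deleting the few vertices of high degree and recursing on them a bounded number of times.

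The principal obstacle is keeping the number of parts polynomial in $1/\e$ and independent of $n$. The naive scheme — extract one neighbourhood as a block of $(1/\e)^{C_{r-1}}$ parts and recurse on the rest — removes only an $\e$-fraction of the current vertex set per round and so costs a factor of roughly $\e^{-1}\log n$, which is not permitted. Beating this forces each round to absorb a \emph{constant} fraction of $V(G)$, and that is precisely the role of the fractional-covering step above; making that step interact correctly with the $K_r$-freeness of the neighbourhoods, and then re-sparsifying without blowing up the part count, is where essentially all of the work lies.
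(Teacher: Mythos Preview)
The statement is a conjecture; the paper does not prove it in general, and neither do you. What the paper does prove (Theorem~\ref{main result}) is the special case $H=K_{r+1}$, in the stronger partition form, and your first paragraph correctly deduces the single-set statement for $H=K_{r+1}$ from that. Your second paragraph is also correct and is essentially Lemma~\ref{sparse set}.

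The gap is in your third and fourth paragraphs, and you flag it yourself: the partition theorem is not proved but only sketched, and the crux---covering the high-degree set $B$ by $\poly(1/\e)$ neighbourhoods via ``fractional covering / LP duality / dependent random choice'' so that each round absorbs a constant fraction of $V(G)$---is left open. It is not clear this step can be made to work. A fractional cover of $B$ by neighbourhoods has no a~priori reason to have value $\poly(1/\e)$ independent of $n$, and even genuine constant-fraction absorption per round still leaves $\Theta(\log n)$ rounds. Nothing in your outline explains how $K_{r+1}$-freeness caps the number of rounds; you say this ``is where essentially all of the work lies,'' and indeed none of that work appears.

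The paper's proof follows a different route and does not induct on $r$ via partitioning neighbourhoods. Alongside a growing family $\mathcal{A}$ of $\e$-sparse parts it maintains a sequence $S_1,\dots,S_k$ of sparse sets satisfying a strong pairwise ``fullness'' condition (built via Lemma~\ref{pairs lemma}) together with a small remainder $R$ dense to every $S_i$. If $k=r$, fullness forces a transversal $K_r$ across the $S_i$ (Lemma~\ref{sequence}), which with any vertex of $R$ yields $K_{r+1}$; hence $k<r$, and the main work shows how to increment $k$ while adding only $\poly(1/\e)$ sets to $\mathcal{A}$ (Lemmas~\ref{sparse set}, \ref{partition most}, \ref{pairs lemma}, \ref{split}). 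The $\log n$ obstacle disappears because the process terminates after at most $r$ increments rather than $\Theta(\log n)$ rounds---a mechanism with no counterpart in your outline.
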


We say that any graph $H$ satisfying Conjecture \ref{Fox-Sudakov conjecture} has the \textit{polynomial Rödl property}. An easy application of Turán's Theorem shows that any graph with the polynomial Rödl property also has the Erd\H{o}s-Hajnal property. It was recently shown in a beautiful paper of Bucić, Fox and Pham that the converse is also true.

\begin{theorem}
    Let $H$ be a graph. Then $H$ has the Erdős-Hajnal property if and only if $H$ has the polynomial Rödl property.
\end{theorem}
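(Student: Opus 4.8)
I would dispose of the easy implication, ``polynomial R\"odl $\Rightarrow$ Erd\H{o}s--Hajnal'', first. Suppose $H$ has the polynomial R\"odl property with constant $C_H$ and let $G$ be an $H$-free graph on $n$ vertices. Apply the polynomial R\"odl property with $\e = n^{-\delta}$ for a small constant $\delta = \delta(C_H)$: this produces a weakly $\e$-restricted set $S$ with $|S| \ge \e^{C_H}n = n^{1-\delta C_H}$, and by definition whichever of $G[S], \overline G[S]$ is the sparse one has at most $\e|S|^2$ edges, hence average degree at most $2\e|S|$. By Tur\'an's theorem such a graph has an independent set of size at least $|S|/(2\e|S|+1) \ge \tfrac14\min\{|S|,\e^{-1}\}$, which here is at least $\tfrac14 n^{\min\{1-\delta C_H,\ \delta\}}$; taking $\delta$ small enough (and handling bounded $n$ directly via the Erd\H{o}s--Szekeres bound) this is at least $n^{c}$ for a suitable $c = c(C_H) > 0$, giving the Erd\H{o}s--Hajnal property.

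For the converse -- which is the whole point -- fix $H$ with the Erd\H{o}s--Hajnal property, with exponent $c>0$, an $H$-free graph $G$ on $n$ vertices, and $\e\in(0,\tfrac12]$; we must produce a weakly $\e$-restricted set of size at least $\e^{O_c(1)}n$. I would first record that the target is self-complementary: $\overline H$ has the Erd\H{o}s--Hajnal property with the same exponent, $\overline G$ is $\overline H$-free, and a set being weakly $\e$-restricted is the same condition for $G$ and for $\overline G$ -- so the ``sparse'' and ``dense'' alternatives can be handled symmetrically throughout. The plan is then an iterative scheme driven by the Erd\H{o}s--Hajnal property: repeatedly feed the current (still $H$-free) induced subgraph into the Erd\H{o}s--Hajnal property to extract a homogeneous set of polynomial size, use that set to pass to a large induced subgraph that is quantifiably ``less mixed'' -- for instance by bucketing the remaining vertices by their degree into the homogeneous set and retaining a carefully chosen bucket of positive density -- and repeat, so that after $O_c(\log(1/\e))$ rounds the current subgraph is already weakly $\e$-restricted while the fraction of vertices surviving all the rounds is $\e^{O_c(1)}$.

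The hard part will be the design and bookkeeping of this iteration, and for a structural reason: the Erd\H{o}s--Hajnal property delivers only a \emph{polynomially} small homogeneous set (of size $m^c$ inside an $m$-vertex subgraph), whereas the conclusion asks for a restricted set whose size is a \emph{constant} (in $n$) fraction of $n$. One therefore cannot afford to discard the homogeneous set and restart in each round, nor can one naively union together homogeneous sets found at different scales, since the densities between them are uncontrolled. Instead one must exhibit a potential -- some measure of how far the current subgraph is from being one-sided sparse -- that improves by a bounded multiplicative factor each round while only a bounded power of an absolute constant of the vertices is lost, all of the $\e$-dependence coming from the $O_c(\log(1/\e))$ rounds; and one must show the one-sided-sparse stopping configuration is reached before the set shrinks too far. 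A clean way to package this would be through an auxiliary ``polynomial regularity'' statement -- that every $H$-free graph admits a vertex partition into $(1/\e)^{O_c(1)}$ parts that is $\e$-homogeneous on almost all pairs -- after which pigeonhole on the largest part finishes; but proving such a statement with only \emph{polynomially} many parts, rather than the tower-type bound of Szemer\'edi's regularity lemma, is itself the crux, and is again driven by invoking the Erd\H{o}s--Hajnal property at many scales. The prototype to keep in mind is the case $H = P_4$ handled by Fox, Nguyen, Scott and Seymour: cographs are perfect, so the iterated join/union structure yields large homogeneous sets essentially for free, and the challenge is to carry out a robust quantitative analogue for an arbitrary Erd\H{o}s--Hajnal graph.
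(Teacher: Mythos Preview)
The paper does not actually prove this theorem: it is quoted as a result of Buci\'c, Fox and Pham, with only the easy direction justified in passing (``An easy application of Tur\'an's Theorem shows that any graph with the polynomial R\"odl property also has the Erd\H{o}s--Hajnal property''). So there is no in-paper proof to compare against for the hard direction.

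Your treatment of the easy implication is correct and is exactly the argument the paper alludes to. For the converse, however, what you have written is a strategy outline rather than a proof. You are candid about this (``The hard part will be the design and bookkeeping of this iteration''), but the outline as it stands does not close: you never specify the potential function that improves by a fixed factor each round, nor do you show why bucketing by degree into a polynomially small homogeneous set yields a bucket that is both large and measurably ``less mixed'' in a way that terminates in $O_c(\log(1/\e))$ rounds. The tension you correctly identify --- that Erd\H{o}s--Hajnal only gives a set of size $m^c$, far too small to output directly --- is precisely the obstacle, and your proposal does not explain how to overcome it beyond asserting that a suitable potential exists. The actual Buci\'c--Fox--Pham argument is substantial and does not reduce to the scheme you describe; in particular, the ``polynomial regularity'' route you mention is itself the theorem in disguise and cannot be invoked as a black box. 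So for the hard direction you have identified the right difficulties but not supplied the missing idea.
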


\bigskip

An interesting variation of the above problem asks to what degree we may partition $H$-free graphs into a `small' number of sets, all of which are either sparse or dense. Indeed, this has a straightforward answer in the context of what we have defined so far.

\begin{theorem}\label{Rodl partition, weak}
    For every graph $H$ and for all $\epsilon>0,$ there exists $N\ge 1$ such that any $H$-free graph $G$ partitions into at most $N$ sets, all of which are weakly $\epsilon$-restricted. Moreover, $H$ satisfies the polynomial Rödl property if and only if we may take $N=(1/\e)^{C_H}$ for some constant $C_H > 0.$
\end{theorem}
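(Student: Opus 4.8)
The statement naturally splits into the existence of \emph{some} bound $N=N(H,\eps)$ and the equivalence of the polynomial Rödl property for $H$ with being able to take $N\le(1/\eps)^{C_H}$. One direction of the equivalence is immediate: if every $H$-free graph $G$ partitions into at most $(1/\eps)^{C_H}$ weakly $\eps$-restricted sets, then the largest part has at least $|G|/(1/\eps)^{C_H}=\eps^{C_H}|G|$ vertices and is weakly $\eps$-restricted, so $H$ has the polynomial Rödl property with the same exponent (and this argument is valid for each fixed $\eps$). The two genuine tasks are therefore (i) the qualitative bound and (ii) the reverse implication, that the polynomial Rödl property lets one take $N$ polynomial in $1/\eps$.

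For (i), the plan is to upgrade Theorem~\ref{Rodl, weak} to a partition statement. The obvious approach --- repeatedly pull out a linear-sized weakly $\eps$-restricted set and recurse on what remains --- only removes a fixed proportion $\delta(H,\eps)$ of the vertices at each step and so produces $\Theta(\delta^{-1}\log|G|)$ parts, which is not independent of $|G|$; to amalgamate the pulled-out pieces into boundedly many sets one has to control the edges \emph{between} them. This is where I would invoke the Szemer\'edi regularity lemma: after regularising $G$ one works with the bounded reduced graph, uses that $G$ is $H$-free to limit its structure (it admits no ``robust'' induced copy of $H$), and groups the regularity parts --- internal edges included --- into a bounded number of weakly $\eps$-restricted sets. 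This yields $N(H,\eps)$ independent of $|G|$, of tower type in $1/\eps$ (alternatively one may quote the partition form of Rödl's theorem directly).

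For (ii) the tower-type bound is worthless, so the regularity lemma must be avoided and the polynomial Rödl property used head-on. Here I would interleave extraction with absorption. By the polynomial Rödl property, $G$ has a weakly $(\eps/3)$-restricted set $S$ with $|S|\ge(\eps/3)^{C_H}|G|$, which by symmetry (interchanging $G$ with $\overline{G}$, noting that weak $\eps$-restrictedness is complement-invariant and $\overline{H}$ again has the polynomial Rödl property) we may take to be $\eps/3$-sparse. Absorb into $S$ every vertex sending at most $\tfrac{\eps}{3}|S|$ edges to $S$: if the set $A$ of such vertices is small relative to $S$, then $S\cup A$ is weakly $\eps$-restricted and one recurses on the remaining vertices; if $A$ is large, then the near-complete bipartite pattern between $A$ and $S$, combined with $H$-freeness, forces a structurally degenerate configuration that cannot recur more than boundedly often, and a charging argument then caps the total number of parts by $\poly(1/\eps)$.

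The step I expect to be the main obstacle is exactly (ii): driving the number of parts down to $\poly(1/\eps)$ while dispensing with the regularity lemma. Greedy extraction never does better than $\Theta(\log|G|)$ parts, so one genuinely has to understand how the extracted pieces interact --- a luxury supplied crudely by regularity in the qualitative case, but which in the polynomial regime has to be engineered by hand from the fact that ``bad'' amalgamations are forced to be (essentially) complete bipartite.
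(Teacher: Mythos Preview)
You have misdiagnosed the difficulty. Your claim that ``greedy extraction never does better than $\Theta(\log|G|)$ parts'' is the source of the overcomplication in both (i) and (ii), and it is simply not true once you observe that the leftover can be absorbed at the end. The paper's argument is a single step: repeatedly apply Theorem~\ref{Rodl, weak} with parameter $\eps/2$ to pull out weakly $(\eps/2)$-restricted sets until the residual set $L$ satisfies $|L|\le (\eps/4)|S_{\max}|$, where $S_{\max}$ is the largest set already extracted; then dump $L$ into $S_{\max}$. Since $S_{\max}$ is weakly $(\eps/2)$-restricted and $|L|\le(\eps/4)|S_{\max}|$, the enlarged set $S_{\max}\cup L$ is weakly $\eps$-restricted. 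The first extracted set already has size at least $\delta|G|$ (where $\delta=\delta(H,\eps/2)$), so one needs only $(1-\delta)^t\le (\eps/4)\delta$, i.e.\ $t=O\bigl(\delta^{-1}\log(1/(\eps\delta))\bigr)$ rounds, independent of $|G|$. When $H$ has the polynomial R\"odl property, $\delta=(\eps/2)^{C_H}$ and this $t$ is polynomial in $1/\eps$, settling both (i) and (ii) at once.

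The point you missed is that ``weakly $\eps$-restricted'' is an \emph{edge-density} condition, so it is robust under adding a set of size $o(\eps|S|)$: no control on the edges between $L$ and $S_{\max}$ is needed. Your appeal to the regularity lemma for (i) is therefore unnecessary (though it would work, at tower-type cost), and your proposed absorption/charging scheme for (ii) --- which you yourself flag as the main obstacle and leave unfinished --- is attacking a nonexistent problem. The contrast with the \emph{strong} partition statement (Theorem~\ref{Rodl partition, strong}) is exactly what the paper is highlighting: for strongly $\eps$-restricted parts the absorption trick fails, because a maximum-degree condition is not robust under adding even a single high-degree vertex, and that is why the strong version is genuinely harder.
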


This can be seen by partitioning all but a very small number of vertices into weakly $(\e/2)$-restricted sets (by repeatedly applying Theorem \ref{Rodl, weak}, along with the relevant quantitative bounds for the `moreover' statement), and then placing the leftover vertices into a largest such one of these sets. 

What if we demand a little more restriction on the sets making up such a partition? Given a graph $G$ and any $\epsilon>0,$ say now that a set $S\subseteq V(G)$ is \textit{strongly $\epsilon$-restricted} if one of $G[S],\overline{G}[S]$ has maximum degree at most $\epsilon|S|.$ It is easily seen that any strongly $\epsilon$-restricted set is weakly $(\epsilon/2)$-restricted, and that any weakly $(\epsilon/2)$-restricted set contains a set of at least half its size which is strongly $\epsilon$-restricted. It follows that Theorem \ref{Rodl, weak} and Conjecture \ref{Fox-Sudakov conjecture} are equivalent to their `strong' analogies (the statement given by replacing the word `weakly' with `strongly'). However, it is not hard to see why the above argument for Theorem \ref{Rodl partition, weak} fails to adapt to the case of strongly restricted sets. Indeed, it was shown only recently in a paper of Chudnovsky, Scott, Seymour and Spirkl~\cite{StrongRodl} that $H$-free graphs can be partitioned into a bounded number of strongly $\e$-restricted sets.

\begin{theorem}\label{Rodl partition, strong}
    For every graph $H$ and for all $\epsilon>0,$ there exists $N\ge 1$ such that any $H$-free graph $G$ partitions into at most $N$ sets, all of which are strongly $\epsilon$-restricted.
\end{theorem}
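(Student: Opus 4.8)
\emph{Proof idea.} My plan is to bootstrap the strong partition from Rödl's theorem in two steps: first reduce to partitioning a graph that is globally ``sparse'' (or ``dense''), and then confront the real difficulty, which is keeping the number of pieces bounded rather than logarithmic in $|G|$.

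Fix $H$ and $\e$, and choose an auxiliary parameter $\e'$ with $0<\e'\ll\e$ (say $\e'=\e^{20}$). I would first apply the weak partition theorem (Theorem~\ref{Rodl partition, weak}) with parameter $\e'$ to obtain a partition $V(G)=T_1\cup\dots\cup T_m$ into $m=m(H,\e')$ weakly $\e'$-restricted sets; passing to complements where necessary, assume each $G[T_i]$ has at most $\e'|T_i|^2$ edges. Then all but at most $(2\e'/\e)|T_i|$ vertices of $T_i$ have degree at most $\e|T_i|$ in $G[T_i]$; deleting the exceptional ones gives $A_i\subseteq T_i$ with $|A_i|\ge\frac12|T_i|$ and $\Delta(G[A_i])\le\e|T_i|\le 2\e|A_i|$, so each $A_i$ is strongly $2\e$-restricted (running the whole argument with $\e/2$ in place of $\e$ makes it strongly $\e$-restricted). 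This produces $m$ good parts and a ``defect'' set $B=\bigcup_i(T_i\setminus A_i)$ with $|B|\le(2\e'/\e)|G|\le\e^{10}|G|$. One could now simply recurse on $G[B]$, which is again $H$-free, but this only yields the bound $O_{H,\e}(\log|G|)$, since each round removes merely a constant fraction of what remains.

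Reducing that to a bound depending only on $H$ and $\e$ is the crux, and is where I expect the real work to be. The plan is to absorb the tiny defect $B$ into the good parts rather than recursing: a vertex $v\in B$ can be added to a sparse-type part $A_i$ while keeping it strongly $O(\e)$-restricted as long as $\deg_{A_i}(v)\le\e|A_i|$, and symmetrically it can be added to a dense-type part via its non-neighbours. Since the $|A_i|$ add up to almost $|G|$ while $|B|$ is negligible, the only obstruction is a vertex $v$ having ``medium'' degree --- strictly between $\e|A_i|$ and $(1-\e)|A_i|$ --- into \emph{every} current part. I would try to rule this out by first arranging (at the cost of only boundedly many extra pieces) that the good parts all have comparable, controlled sizes, so that ``medium into every part'' forces $v$ to have medium degree in $G$, and then argue that the set of such obstructing vertices, being contained in $B$, is small enough to be handled by a single further application of Rödl's theorem, or else can be routed into $O_{H,\e}(1)$ dense-type parts. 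That a bounded number of dense-type pieces is genuinely needed --- and suffices --- is already visible for a disjoint union of cliques of sizes $|G|/2,|G|/4,\dots$, where roughly $1/\e$ dense pieces soak up the largest near-cliques and the residue is then sparse enough relative to its size to be swept into $O(1)$ further pieces. Making this mechanism work in general --- for instance, proving that the ``medium-degree'' obstruction really has size bounded in terms of $H$ and $\e$, or instead passing to a graph limit of hypothetical counterexamples and establishing the statement for $H$-free graphons --- is, I expect, the technical heart of the argument.
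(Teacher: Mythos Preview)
The paper does not itself prove Theorem~\ref{Rodl partition, strong}; it is quoted from Chudnovsky, Scott, Seymour and Spirkl~\cite{StrongRodl}, and the paper's own work concerns the quantitative (polynomial) bound when $H$ is complete. So there is no proof here against which to compare your proposal.

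As for the proposal itself: you correctly isolate the difficulty --- naive iteration of the weak partition gives only $O_{H,\e}(\log|G|)$ parts, and one must somehow absorb the defect set $B$ --- but what you have written is a plan rather than a proof, and the plan has a genuine gap precisely where you flag it. Your obstruction is a vertex $v\in B$ with degree strictly between $\e|A_i|$ and $(1-\e)|A_i|$ into every part $A_i$; arranging comparable part sizes would indeed force such a $v$ to have medium total degree in $G$, but $H$-freeness says nothing whatsoever about vertices of medium degree, so this does not bound their number by any function of $H$ and $\e$. The only control you have on the obstructing set is $|B|\le\e^{10}|G|$, which is unbounded in $|G|$ and leaves you exactly at the $\log|G|$ recursion you were trying to avoid. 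Your disjoint-cliques heuristic is also not right as stated: after peeling off the $\lceil 1/\e\rceil$ largest cliques as dense pieces, the residue is \emph{not} $\e$-sparse relative to its own size --- its largest clique is still half of it --- so it cannot be swept into $O(1)$ sparse pieces drawn from the residue alone. (What actually works in that toy case is to pad a single sparse piece with thin slivers of the large cliques so that its size is boosted; but this is special to disjoint unions of cliques and does not suggest a general absorption mechanism.) The graphon route you mention at the end would be a separate, self-contained argument rather than a completion of the absorption scheme; as it stands, your proposal does not contain a proof.
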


A corresponding strengthening of Conjecture \ref{Fox-Sudakov conjecture} was raised by Fox, Nguyen, Scott and Seymour~\cite{Foxetalcographs}: given a graph $H,$ is there some constant $C_H>0$ such that we may take $N= (1/\e)^{C_H}$ in Theorem \ref{Rodl partition, strong}? Say that any graph $H$ for which this holds has the \textit{strong polynomial Rödl property}. It was shown in~\cite{Foxetalcographs} that $H=P_4$ has this property, and it was remarked that the same was not known when $H$ is a triangle. We address this case and, more generally, the case where $H$ is any complete graph. 

\begin{theorem}
    Every complete graph has the strong polynomial Rödl property.
\end{theorem}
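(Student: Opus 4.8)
The plan is to prove the statement for $H = K_{r+1}$ by induction on $r$, with the base case $r = 1$ (triangle-free graphs) being the heart of the matter. Fix $0 < \e \le 1/2$ and a $K_{r+1}$-free graph $G$. The overall strategy is a Ramsey-type decomposition: I want to repeatedly peel off polynomially many strongly $\e$-restricted sets that are \emph{sparse} (that is, $G[S_i]$ has max degree at most $\e|S_i|$, rather than being the complement of such), until almost nothing is left, and then handle the remainder. The key structural input is the fact that $K_{r+1}$-free graphs with no large sparse-restricted set must be, in a quantitative sense, everywhere of density bounded away from zero; by Turán's theorem and a supersaturation/dependent-random-choice argument, a $K_{r+1}$-free graph on $n$ vertices has at most $(1 - 1/r + o(1))\binom{n}{2}$ edges, so it always contains a vertex of degree at most $(1 - 1/r)n$, and more usefully, after removing a small set one can find a large induced subgraph that is sparse in the strong sense unless the graph has a robustly dense structure.

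Concretely, I would first establish the following lemma: there is $c = c_r > 0$ such that every $K_{r+1}$-free graph $G$ on $n$ vertices contains a strongly $\e$-restricted set $S$ with $|S| \ge \e^{c}n$, where moreover $G[S]$ (not its complement) has max degree at most $\e|S|$. To prove this, iterate the following: among all vertices, a constant fraction have degree at most $(1 - 1/(2r))n$ (else a Kővári–Sós–Turán / Turán count forces a $K_{r+1}$); restrict to the neighbourhood-complement of a carefully chosen vertex set to drop the density, and recurse. After $O(r \log(1/\e))$ rounds the max degree inside the surviving set drops below $\e$ times its size, while the size has only shrunk by a factor $\e^{O(r)}$. (This is essentially the strong polynomial Rödl property for the \emph{one-sided} sparse version, which for complete graphs is genuinely easier than the general two-sided statement because complete graphs force a clean Turán-type density deficiency.)

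Given this lemma, the partition is built greedily: apply it to $G$ to extract $S_1$ with $|S_1| \ge \e^{c}|G|$ and $G[S_1]$ of max degree $\le \e|S_1|$; then apply it to $G - S_1$, and so on. This produces sets $S_1, \dots, S_k$ covering all but at most $(1 - \e^{c})$-fraction... — but naive greedy removal only gives $(1/\e)^{O(\log |G|)}$ many sets, not polynomially many, so the greedy step must instead extract, at each stage, a near-partition of a \emph{constant fraction} of the current vertex set into strongly restricted sets, which one obtains by applying the lemma inside an equitable partition or by a Ramsey-based "many disjoint copies" argument. Repeating $O(\log(1/\e) \cdot \text{something})$ times... the cleanest route is: show $G$ partitions into $(1/\e)^{C_r'}$ pieces each of which is \emph{either} strongly $\e$-restricted \emph{or} $K_r$-free, then apply the inductive hypothesis to the $K_r$-free pieces (each with the same $\e$), multiplying the bounds. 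Since $(1/\e)^{C_{r-1}} \cdot (1/\e)^{C_r'} = (1/\e)^{C_{r-1} + C_r'}$, setting $C_r = C_{r-1} + C_r'$ closes the induction; the base case $r = 1$ is trivial since a triangle-free... a $K_2$-free graph is edgeless.

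The main obstacle I anticipate is the step producing a partition into polynomially many pieces that are each strongly restricted or $K_r$-free — equivalently, converting the "large strongly restricted subset" lemma into a "polynomial-size partition" statement without the logarithmic blow-up inherent in naive greedy peeling. The trick used in~\cite{Foxetalcographs} for $P_4$ and in~\cite{StrongRodl} is to find, inside any $K_{r+1}$-free $G$, \emph{many vertex-disjoint} large strongly restricted sets simultaneously (covering a constant fraction of $V(G)$ in one shot), which then recurses only $O(\log_{1/\e} |G| / \log(1/\e)) = O(1)$-many effective times after rebalancing — or, alternatively, to pass to a weighted/fractional relaxation where a constant fraction is removed per round and the recursion depth is $O(\log \log |G|)$, still too many, so one really does need the "constant fraction in one shot" version. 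Establishing that, for $K_{r+1}$-free graphs, via a partition of $V(G)$ according to a greedy maximal family of disjoint large sparse-restricted sets and a Turán-type argument showing the leftover (which meets every such set in few vertices) is itself $K_r$-free or degenerate, is where the real work lies.
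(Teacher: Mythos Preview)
Your proposal correctly isolates the one genuine difficulty and then stops short of resolving it. You obtain (the analogue of the paper's Lemma~\ref{sparse set}) a single $\e$-sparse set of size $\ge \e^{c}|G|$, and you correctly observe that iterating this greedily leaves a leftover that cannot simply be thrown into an existing part, because a single leftover vertex may be adjacent to every vertex of every sparse set built so far. Your two suggested escapes are not developed and, as stated, do not work: there is no reason the leftover from a maximal family of disjoint $\e$-sparse sets should be $K_r$-free (a small $K_r$ whose vertices are each adjacent to a large fraction of a huge sparse set is $K_{r+1}$-free yet survives into the leftover), and you give no mechanism for extracting disjoint sparse sets covering a constant fraction of $V(G)$ in one shot. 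You yourself flag this as ``where the real work lies''; that is accurate, and the proposal as written is a plan, not a proof.

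The paper's route is different from both of your suggested fixes. Rather than driving the leftover to be empty or $K_r$-free, it maintains a partition $\mathcal{A}\cup\{S_1,\dots,S_k,R\}$ where the $S_i$ are sparse, the sequence $S_1,\dots,S_k$ satisfies a robust pairwise density condition (``fullness''), and the remainder $R$ is tiny relative to each $S_i$ and \emph{dense} to each $S_i$. If $k=r$ this fullness forces a transversal $K_r$ in the common neighbourhood of any $v\in R$, giving a $K_{r+1}$; so $k<r$, and the proof shows $k$ can be incremented. The step you are missing is how the increment handles the new leftover: vertices of $R'$ that are $(\e/3)$-sparse to some $Y_i$ are absorbed not into $Y_i$ itself but into a \emph{random half} $T_i$ of $Y_i$ (Lemma~\ref{split}); because $|R'|\le (\e/300)|Y_i|$ and the sparse-to-$Y_i$ vertices remain sparse to $T_i$, the union $T_i\cup B_i$ is $\e$-sparse. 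Vertices dense to every $Y_i$ form the new remainder $\tilde{R}$, and a separate ``pairs lemma'' (Lemma~\ref{pairs lemma}) is needed to carve the new $S_{k+1}$ out of $R$ while preserving fullness to the earlier $S_i$. Neither of these ingredients appears in your outline.
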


\section{Proofs}

Given $\e>0$ and a graph $G,$ say that a set $S\subseteq V(G)$ is $\e$-sparse if $G[S]$ has maximum degree at most $\e |S|.$ Moreover, we say that $\mathcal{A}$ partitions $G$ if it consists of pairwise disjoint subsets of $V(G)$ such that $\cup_{A\in \mathcal{A}}A=V(G)$.

We will actually prove the following stronger result.

\begin{theorem}\label{main result}
    For each integer $r\ge 2,$ there exists a constant $C_r>0$ such that for any $0<\e\le 1/2,$ every $K_{r+1}$-free graph $G$ can be partitioned into at most $(1/\e)^{C_r}$ sets, all of which are $\e$-sparse.
\end{theorem}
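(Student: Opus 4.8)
The natural approach is induction on $r$. For $r = 2$ ($K_3$-free graphs), a $K_3$-free graph $G$ has the property that the neighborhood of every vertex is a stable set; such graphs are sparse in a global sense (by Ramsey/Kővári–Sós–Turán they have $O(n^{3/2})$ edges), but we need a partition into $\e$-sparse pieces. The key local fact is that if some vertex $v$ has degree $\ge \e n$, then $N(v)$ is a stable set of size $\ge \e n$, hence itself a $0$-sparse (thus $\e$-sparse) set we can peel off. Repeatedly peeling off such neighborhoods, we remove all high-degree vertices; after at most $1/\e$ such peels the remaining graph has maximum degree $< \e n \le \e \cdot (\text{current size})^{-1}$... wait — that is not quite right, since the bound $\e n$ is in terms of the *original* $n$, so after peeling the set that remains has max degree $\le \e n$ but possibly small size. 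This is the first genuine subtlety, and it is the mechanism behind the whole proof: we should work with a threshold relative to the *current* set, and argue that the process of peeling off stable sets (in the triangle-free case) or, inductively, $K_r$-free induced subgraphs of bounded-degree neighborhoods terminates after polynomially many steps in $1/\e$.

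For the inductive step, suppose the result holds for $r$ with constant $C_r$, and let $G$ be $K_{r+1}$-free on $n$ vertices. If every vertex has degree $\le \e n$ we are done with one part. Otherwise, I would repeatedly do the following: while there exists a vertex $v$ in the current graph $G'$ (on $n'$ vertices) with $\Deg_{G'}(v) > \e n'$, the neighborhood $N_{G'}(v)$ is a $K_r$-free graph on $\ge \e n'$ vertices, so by the inductive hypothesis (applied with the *same* $\e$) it partitions into at most $(1/\e)^{C_r}$ sets that are $\e$-sparse *within* $N_{G'}(v)$. The problem is that $\e$-sparse relative to $N_{G'}(v)$ is weaker than $\e$-sparse relative to $G$, unless $|N_{G'}(v)|$ is a definite fraction of $n$. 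So I would instead apply the inductive hypothesis with a smaller parameter, roughly $\e^2$, to get parts that are $\e^2$-sparse inside a neighborhood of size $\ge \e n'$; such a part $S$ then has $\Deg_{G'[S]}(u) \le \e^2 |S| \le \e^2 |N_{G'}(v)| \le \e \cdot \e n' $, and if we have also ensured $|S|$ is not too small relative to $n'$ this is $\le \e |S|$. Managing these two competing size requirements — $S$ large enough to absorb the degree bound, but the neighborhoods shrinking as we peel — is the crux.

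The clean way to organize this, which I expect to be the decisive idea, is a potential/weight argument: assign to each vertex a weight and show each peeling step removes a controlled amount of total weight, or alternatively stratify vertices by degree into $O(\log(1/\e))$ or $O(1/\e)$ bands and handle each band with a bounded (in terms of $(1/\e)^{C_r}$) number of parts, using that within a band all neighborhoods have comparable size. Concretely: let $n$ be fixed, call a vertex \emph{heavy} if its degree in $G$ exceeds $\e^2 n/2$; the heavy vertices number at most $2/\e^2$ times the max, but more usefully, pick heavy $v$, apply induction to $G[N(v)]$ with parameter $\e^{O(1)}$ to partition $N(v)$ into $(1/\e)^{O(C_r)}$ parts each of which is $\e$-sparse in $G$ (using $|N(v)| \ge \e^2 n /2$ and that max degree in each part is at most $\e^{O(1)}|N(v)| \le \e |$part$|$ once parts are shown to be a fixed fraction of $|N(v)|$ — which one arranges by merging small parts). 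Remove these; repeat. Each round removes a vertex of degree $> \e^2 n/2$ together with its neighborhood, but we must bound the number of rounds: after $k$ rounds we have removed $\ge k$ vertices each "responsible" for $\ge \e^2 n/2$ edges, and since $G$ is $K_{r+1}$-free it has at most $\ex(n, K_{r+1}) = (1 - 1/r + o(1))\binom{n}{2} \le n^2/2$ edges, so $k \le 1/\e^2 \cdot$(const) — polynomial in $1/\e$. Multiplying, we get $(1/\e)^{O(1)} \cdot (1/\e)^{O(C_r)} = (1/\e)^{C_{r+1}}$ parts. The main obstacle, and where care is needed, is exactly the bookkeeping that makes "$\e$-sparse inside a shrinking neighborhood" upgrade to "$\e$-sparse inside $G$" — this forces the extra $\e^{O(1)}$ loss in the induction parameter and is what makes the constant $C_r$ grow (presumably like $c^r$), so one must check the recursion $C_{r+1} = O(C_r) + O(1)$ indeed closes.
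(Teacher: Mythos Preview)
Your plan has a genuine gap that you flag in the first paragraph and never actually resolve: the leftover set after peeling. If ``heavy'' means degree $>\e^2 n/2$ relative to the \emph{original} $n$, then each round removes at least $\e^2 n/2$ vertices, so at most $2/\e^2$ rounds occur --- but when the process halts the remainder $G''$ merely has maximum degree $\le \e^2 n/2$, while $|G''|$ may be arbitrarily small, so $G''$ need not be $\e$-sparse and you give no mechanism to partition or absorb it. (Your edge-counting line in fact gives $k\le n/\e^2$, not $O(1/\e^2)$.) If instead ``heavy'' means degree $>\e|G'|$ relative to the \emph{current} graph, the process halts at an $\e$-sparse set by definition, but now each round only shrinks $|G'|$ by a factor $1-\e$, and there is no reason the number of rounds is bounded in terms of $\e$ alone rather than $\Theta((1/\e)\log|G|)$. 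This dichotomy is exactly why the easy argument for weakly restricted partitions fails to adapt to strongly restricted ones, as the paper remarks after Theorem~\ref{Rodl partition, weak}. Two further points: your declared ``main obstacle'' is a non-issue, since $\e$-sparseness of $S$ depends only on $G[S]$, so a set that is $\e$-sparse inside $G[N(v)]$ is automatically $\e$-sparse in $G$; and ``merging small parts'' does not preserve $\e$-sparseness (two stable sets can merge to a complete bipartite graph), so that fix is not available either.

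The paper's proof takes a quite different route and does not induct on $r$ by peeling neighbourhoods. It runs an extremal argument: one maintains a partition $\mathcal{A}\cup\{S_1,\dots,S_k,R\}$ where $\mathcal{A}$ collects the $\e$-sparse parts already produced, the $S_i$ form an ``$(\alpha,\beta,\gamma)$-full'' sequence (robustly dense between every ordered pair in the sense defined before Lemma~\ref{sequence}), and the leftover $R$ is tiny and $(\e/6)$-dense to every $S_i$. If $k=r$, fullness together with the density of any $v\in R$ would force a transversal $K_{r+1}$ via Lemma~\ref{sequence}, so $k<r$. The core of the proof is then a construction showing $k$ can always be incremented, contradicting maximality: a ``pairs lemma'' (Lemma~\ref{pairs lemma}) extracts a full pair from any pair $(A,B)$ with $B$ dense to $A$, while a random-splitting step (Lemma~\ref{split}) lets one carve each $Y_i$ into two large pieces so that the handful of leftover vertices sparse to the new $S_{k+1}$ can be absorbed into $\e$-sparse parts. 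Your greedy peeling does appear, but only as the auxiliary Lemma~\ref{partition most}, used to shrink $R$ before this structural step; it is not the engine of the argument, and the leftover problem is handled precisely by the full-sequence/absorption machinery rather than by the induction you sketch.
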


We will use the following standard Chernoff bound.

\begin{lemma}\label{chernoff}
    Let $X$ be a binomially distributed random variable with mean $\mu.$ Then for all $0<\delta<1,$ the following holds.

    $$\mathbb{P}\left(|X-\mu| \ge \delta\mu\right) \le 2e^{-\delta^2\mu/3}.$$
\end{lemma}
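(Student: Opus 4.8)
\textbf{Plan for the proof of Lemma \ref{chernoff}.}
The statement is the standard two-sided multiplicative Chernoff bound for a binomial random variable, so the plan is to prove the two one-sided tail bounds separately and combine them via a union bound, absorbing the factor of $2$. Write $X = \sum_{i=1}^n X_i$ as a sum of independent Bernoulli random variables with $\mathbb{E}[X] = \mu$. The workhorse is the exponential moment method: for any $\lambda \ge 0$ and any threshold $a$, Markov's inequality applied to $e^{\lambda X}$ gives $\P(X \ge a) \le e^{-\lambda a}\,\E[e^{\lambda X}]$, and by independence $\E[e^{\lambda X}] = \prod_i \E[e^{\lambda X_i}] = \prod_i (1 + p_i(e^{\lambda}-1)) \le \prod_i e^{p_i(e^{\lambda}-1)} = e^{\mu(e^{\lambda}-1)}$, using $1+x \le e^x$. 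An identical computation with $\lambda \le 0$ handles the lower tail.

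For the upper tail, I would set $a = (1+\delta)\mu$ and optimize over $\lambda > 0$; the optimal choice is $e^{\lambda} = 1+\delta$, which yields the classical bound $\P(X \ge (1+\delta)\mu) \le \left(\frac{e^{\delta}}{(1+\delta)^{1+\delta}}\right)^{\mu}$. It then suffices to show $\frac{e^{\delta}}{(1+\delta)^{1+\delta}} \le e^{-\delta^2/3}$ for $0 < \delta < 1$, equivalently $\delta - (1+\delta)\ln(1+\delta) \le -\delta^2/3$; this follows from the Taylor estimate $\ln(1+\delta) \ge \delta - \delta^2/2 + \delta^3/3 - \cdots$ or, more cleanly, from analyzing the single-variable function $f(\delta) = (1+\delta)\ln(1+\delta) - \delta - \delta^2/3$ and checking $f(0)=0$, $f'(0)=0$, $f''(\delta) = \frac{1}{1+\delta} - \frac{2}{3} \ge 0$ on $(0,1)$. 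For the lower tail, taking $a = (1-\delta)\mu$ and the optimal $e^{\lambda} = 1-\delta$ gives $\P(X \le (1-\delta)\mu) \le \left(\frac{e^{-\delta}}{(1-\delta)^{1-\delta}}\right)^{\mu} \le e^{-\delta^2\mu/2} \le e^{-\delta^2\mu/3}$, where the middle inequality again reduces to a one-variable calculus check, namely $-\delta - (1-\delta)\ln(1-\delta) \le -\delta^2/2$ on $(0,1)$.

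Combining the two tails by the union bound gives $\P(|X-\mu| \ge \delta\mu) \le \P(X \ge (1+\delta)\mu) + \P(X \le (1-\delta)\mu) \le e^{-\delta^2\mu/3} + e^{-\delta^2\mu/3} = 2e^{-\delta^2\mu/3}$, which is exactly the claimed inequality. The only genuinely non-mechanical part is verifying the two elementary analytic inequalities bounding $\frac{e^{\pm\delta}}{(1\pm\delta)^{1\pm\delta}}$; these are entirely standard and I would dispatch them with the convexity/second-derivative argument sketched above. Since this lemma is invoked only as a black box later in the paper, a short self-contained proof along these lines — or simply a citation to a standard reference such as a concentration-inequalities text — suffices; I would present the exponential-moment derivation in a few lines and relegate the numeric inequalities to a parenthetical remark.
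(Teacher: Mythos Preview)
The paper does not prove this lemma at all; it is simply stated as ``the following standard Chernoff bound'' and used as a black box, which is exactly one of the options you yourself allow for at the end of your proposal.

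Your exponential-moment argument is the standard one and is essentially correct, but there is a small slip in the upper-tail calculus check. You claim that for $f(\delta) = (1+\delta)\ln(1+\delta) - \delta - \delta^2/3$ one has $f''(\delta) = \frac{1}{1+\delta} - \frac{2}{3} \ge 0$ on $(0,1)$; this is false for $\delta > 1/2$ (e.g.\ $f''(1) = -1/6$). The conclusion $f \ge 0$ on $[0,1]$ is nonetheless true: since $f'(0)=0$, $f''$ is positive on $(0,1/2)$ and negative on $(1/2,1)$, the derivative $f'$ first increases then decreases, and $f'(1) = \ln 2 - 2/3 > 0$, so $f' \ge 0$ throughout and $f$ is nondecreasing from $f(0)=0$. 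With that patch your argument goes through; the lower-tail check $g''(\delta) = -\delta/(1-\delta) \le 0$ is fine as written.
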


We begin with the following easy lemma.

\begin{lemma}\label{sparse set}
    Let $r\ge 1$ be an integer. Then for all $0< \alpha \le 1,$ if $G$ is any $K_{r+1}$-free graph, there exists an $\alpha$-sparse set $S\subseteq V(G)$ of size at least $\alpha^{r-1}|G|.$
\end{lemma}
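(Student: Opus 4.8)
The plan is to prove Lemma~\ref{sparse set} by induction on $r$. For the base case $r=1$, a $K_2$-free graph has no edges at all, so we may simply take $S = V(G)$, which is trivially $\alpha$-sparse and has size $|G| = \alpha^{0}|G|$. For the inductive step, suppose $r \ge 2$ and that the statement holds for $r-1$; let $G$ be a $K_{r+1}$-free graph, and without loss of generality $G$ is nonempty.

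The key dichotomy is whether $G$ has a vertex of high degree. First I would consider the case where every vertex of $G$ has degree at most $\alpha|G|$: then $S = V(G)$ itself is $\alpha$-sparse of size $|G| \ge \alpha^{r-1}|G|$, and we are done. Otherwise, there is a vertex $v$ with $\deg(v) > \alpha|G|$. Let $N = N(v)$ be its neighbourhood, so $|N| > \alpha|G|$. Since $G$ is $K_{r+1}$-free, the induced subgraph $G[N]$ contains no $K_r$, i.e.\ it is $K_{((r-1)+1)}$-free. Applying the induction hypothesis to $G[N]$ with the same parameter $\alpha$, we obtain an $\alpha$-sparse set $S \subseteq N$ with
\[
|S| \ge \alpha^{(r-1)-1}|N| = \alpha^{r-2}|N| > \alpha^{r-2}\cdot \alpha|G| = \alpha^{r-1}|G|.
\]
Here "$\alpha$-sparse" is an intrinsic property of the induced subgraph $G[S]$, so a set that is $\alpha$-sparse as a subset of $G[N]$ remains $\alpha$-sparse as a subset of $G$. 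This set $S$ satisfies the required size bound (with strict inequality, even), completing the induction.

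There is essentially no serious obstacle here; the only point requiring a little care is the bookkeeping of exponents — ensuring that the "loss" of a factor of $\alpha$ from passing to a neighbourhood, combined with the inductive factor $\alpha^{r-2}$, yields exactly $\alpha^{r-1}$ — and the observation that sparseness passes freely between a graph and its induced subgraphs. One might alternatively phrase the argument non-recursively: greedily build a decreasing chain $V(G) = V_0 \supseteq V_1 \supseteq \cdots$ where, as long as $G[V_i]$ has a vertex $v_i$ of degree exceeding $\alpha|G|$ within $V_i$, we set $V_{i+1} = N_{G[V_i]}(v_i)$; each step shrinks the set by at most a factor $\alpha$ and reduces the clique number by one, so after at most $r-1$ steps we reach a set that is $\alpha$-sparse and has size at least $\alpha^{r-1}|G|$. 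Both formulations are routine; I expect the induction to be the cleanest to write.
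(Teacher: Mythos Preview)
Your proof is correct and follows exactly the same approach as the paper: induct on $r$, with the dichotomy that either $V(G)$ is already $\alpha$-sparse, or some vertex $v$ has at least $\alpha|G|$ neighbours and one applies the induction hypothesis to the $K_r$-free graph $G[N(v)]$. The paper's version is just a terser write-up of the same argument.
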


\begin{proof}
    Proceed by induction on $r\ge 1.$ For $r=1$ this is trivial, so suppose that $r\ge 2.$ If $V(G)$ is $\alpha$-sparse, then we are done; otherwise, there is some vertex $v\in V(G)$ with at least $\alpha|G|$ neighbours. Since $G[N(v)]$ must be $K_r$-free, applying the inductive hypothesis gives the result.
\end{proof}

We will use Lemma \ref{sparse set} both in its own right, and also in the following application.

\begin{lemma}\label{partition most}
    Let $r\ge 2$ be an integer and let $m \ge 0.$ Then for all $0 < \alpha \le 1/2,$ if $G$ is any $K_{r+1}$-free graph, there exists a partition $\mathcal{A} \cup \{L\}$ of $V(G),$ such that

    \begin{itemize}
        \item $|\mathcal{A}| \le (1/\alpha)^{m+2r-1},$ and each set in $\mathcal{A}$ is $\alpha$-sparse;
        \item $|L| \le \alpha^{(r-1)(1/\alpha)^m}|G|.$
    \end{itemize}
\end{lemma}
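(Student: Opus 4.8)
The plan is to iterate Lemma \ref{sparse set} in a "peeling" fashion, but to do so in a way where the leftover shrinks multiplicatively by a bounded factor at each round while the number of sparse sets produced per round does not blow up. The key observation is that Lemma \ref{sparse set} gives us a single $\alpha$-sparse set covering an $\alpha^{r-1}$-fraction of any $K_{r+1}$-free graph; repeating this $(1/\alpha)^{r-1}$-ish times on the successive leftovers would remove a constant fraction, but that only wins a constant factor on $|L|$ per batch, which is far too slow to reach the doubly-exponential bound $\alpha^{(r-1)(1/\alpha)^m}$ using only $(1/\alpha)^{m+2r-1}$ sets. So instead I would set up an induction on $m$.

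First I would handle the base case $m=0$ directly: here we need $|L|\le \alpha^{r-1}|G|$ using at most $(1/\alpha)^{2r-1}$ sparse sets. Take the $\alpha$-sparse set $S_1$ from Lemma \ref{sparse set} applied to $G$, set it aside, and recurse on $G-S_1$ (still $K_{r+1}$-free); after $k$ steps the leftover has size at most $(1-\alpha^{r-1})^k|G|$. Choosing $k$ so that $(1-\alpha^{r-1})^k\le \alpha^{r-1}$, which holds once $k\ge \alpha^{-(r-1)}(r-1)\ln(1/\alpha)$, and bounding this crudely by $(1/\alpha)^{2r-1}$ (valid since $\alpha\le 1/2$, so $\ln(1/\alpha)$ is absorbed by one extra factor of $1/\alpha$, and $r-1\le (1/\alpha)^{r-1}$ trivially), gives the base case. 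The inductive step is the heart of the argument: assuming the statement for $m$, apply the $m$-case to get a partition $\mathcal{A}_0\cup\{L_0\}$ with $|\mathcal{A}_0|\le(1/\alpha)^{m+2r-1}$ and $|L_0|\le \alpha^{(r-1)(1/\alpha)^m}|G|$. Now repeat: apply the $m$-case to $G[L_0]$, getting $\mathcal{A}_1\cup\{L_1\}$ with $|L_1|\le \alpha^{(r-1)(1/\alpha)^m}|L_0|\le \alpha^{2(r-1)(1/\alpha)^m}|G|$, and so on. After $(1/\alpha)$ rounds of this, the final leftover $L$ satisfies $|L|\le \alpha^{(r-1)(1/\alpha)^m\cdot(1/\alpha)}|G| = \alpha^{(r-1)(1/\alpha)^{m+1}}|G|$, exactly what we want. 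The total number of sparse sets is at most $(1/\alpha)$ rounds times $(1/\alpha)^{m+2r-1}$ sets each, i.e. $(1/\alpha)^{m+2r} \le (1/\alpha)^{(m+1)+2r-1}$, which is the required bound. Setting $\mathcal{A}=\mathcal{A}_0\cup\dots\cup\mathcal{A}_{(1/\alpha)-1}$ completes the step.

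The main obstacle I anticipate is purely bookkeeping: making sure the constants line up so that each round's contribution of $(1/\alpha)^{m+2r-1}$ sets, summed over $1/\alpha$ rounds, stays within $(1/\alpha)^{(m+1)+2r-1}$ — i.e. that $(1/\alpha)\cdot(1/\alpha)^{m+2r-1} = (1/\alpha)^{m+2r}$ is what we can afford, which it is exactly — and that the base-case count $(1/\alpha)^{2r-1}$ genuinely dominates $\alpha^{-(r-1)}(r-1)\ln(1/\alpha)$ for all $0<\alpha\le 1/2$. One mild subtlety: if $1/\alpha$ is not an integer, replace it by $\lceil 1/\alpha\rceil$ throughout in the number of rounds; since the exponent $(r-1)(1/\alpha)^m$ only needs to be multiplied by something $\ge 1/\alpha$ this is harmless, and $\lceil 1/\alpha\rceil \le (1/\alpha)^2$ say, so the set count is still absorbed (one could also simply be generous and allow $(1/\alpha)^{m+2r}$ rounds worth — the stated exponent $m+2r-1$ in the lemma already has slack built in via $r\ge 2$). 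A cleaner alternative avoiding the ceiling issue is to run the recursion $\lfloor 1/\alpha\rfloor+1$ times and note $\alpha\le 1/2$ forces $\lfloor 1/\alpha\rfloor+1 \le 2/\alpha \le (1/\alpha)^2$, again comfortably within budget; I would pick whichever makes the write-up shortest.
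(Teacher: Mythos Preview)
Your core idea --- greedily peel off $\alpha$-sparse sets via Lemma~\ref{sparse set} and bound the leftover --- is exactly what the paper does. The difference is packaging: the paper runs the peeling in a single pass, applying Lemma~\ref{sparse set} exactly $\lfloor (1/\alpha)^{m+2r-1}\rfloor$ times and then bounding $(1-\alpha^{r-1})^{(1/\alpha)^{m+2r-2}}$ directly via the elementary inequality $(1-x)^{(1/x)^s}\le x^{(1/x)^{s-2}}$ (their Lemma~\ref{approx}, applied with $x=\alpha^{r-1}$ and $s=\tfrac{m}{r-1}+2$). No induction on $m$ is needed, and the floor is harmless because one only needs $\lfloor(1/\alpha)^{m+2r-1}\rfloor\ge (1/\alpha)^{m+2r-2}$.

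Your inductive wrapping, by contrast, introduces a genuine (if small) problem that your proposed patches do not fix. In the step from $m$ to $m+1$ you need a number of rounds $t$ satisfying simultaneously $t\ge 1/\alpha$ (for the leftover bound) and $t\cdot(1/\alpha)^{m+2r-1}\le (1/\alpha)^{m+2r}$, i.e.\ $t\le 1/\alpha$; so unless $1/\alpha$ is an integer there is no valid $t$. Replacing $t$ by $\lceil 1/\alpha\rceil$ or $\lfloor 1/\alpha\rfloor+1$ and invoking $\lceil 1/\alpha\rceil\le(1/\alpha)^2$ gives a set count of $(1/\alpha)^{m+2r+1}$, which overshoots the target $(1/\alpha)^{(m+1)+2r-1}$ by a factor of $1/\alpha$; iterating this loss $m$ times is fatal. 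The ``slack built in via $r\ge 2$'' lives entirely in the base case and does not compound, so it cannot absorb a multiplicative $1/\alpha$ loss per inductive step. The cleanest repair is simply to drop the induction and do the one-pass argument: that is precisely the paper's proof, and your base-case computation (with $(1-\alpha^{r-1})^k\le \alpha^{r-1}$ for $k\gtrsim \alpha^{-(r-1)}(r-1)\ln(1/\alpha)$) is already the right estimate, just iterated once rather than to depth $(1/\alpha)^{m+2r-2}$.
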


We will need the following approximation, the proof of which we omit.

\begin{lemma}\label{approx}
    Let $0< x < 1.$ Then for all $s\ge 1,$ we have that

    $$(1-x)^{(1/x)^s} \le x^{(1/x)^{s-2}}.$$
    
\end{lemma}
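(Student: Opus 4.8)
The plan is to take logarithms and reduce the claimed inequality to a single clean elementary inequality in one variable. Since $0<x<1$, both $\ln(1-x)<0$ and $\ln x<0$, and since $\exp$ is increasing, the assertion $(1-x)^{(1/x)^s}\le x^{(1/x)^{s-2}}$ is equivalent to
$$(1/x)^s\ln(1-x)\le (1/x)^{s-2}\ln x.$$
Dividing through by the strictly positive quantity $(1/x)^{s-2}$ — note that $s-2$ may be negative, but $x>0$ makes this harmless for every $s\ge 1$ — this becomes $(1/x)^2\ln(1-x)\le \ln x$, i.e., after multiplying by $x^2>0$,
$$\ln(1-x)\le x^2\ln x.$$
So it suffices to establish this last inequality for all $0<x<1$.

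To prove $\ln(1-x)\le x^2\ln x$, I would sandwich both sides against $-x$. First, the standard bound $\ln(1+t)\le t$ applied with $t=-x$ gives $\ln(1-x)\le -x$. Second, applying $\ln t\le t-1\le t$ with $t=1/x>1$ gives $\ln(1/x)\le 1/x$, hence $\ln x=-\ln(1/x)\ge -1/x$; multiplying by $x^2>0$ yields $x^2\ln x\ge -x$. Chaining these two bounds gives $\ln(1-x)\le -x\le x^2\ln x$, as needed. (Equivalently one may note that the minimum of $t\mapsto t\ln t$ on $(0,1)$ is $-1/e>-1$, so $x\ln x\ge -1$ and therefore $x^2\ln x\ge -x$.)

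Finally I would reverse the reduction: multiply $(1/x)^2\ln(1-x)\le \ln x$ by $(1/x)^{s-2}>0$ to recover $(1/x)^s\ln(1-x)\le (1/x)^{s-2}\ln x$, and exponentiate both sides. There is no substantive obstacle here; the only point demanding care is bookkeeping with the directions of the inequalities — all multiplications and divisions are by strictly positive quantities, so nothing flips, and since we are comparing two \emph{negative} numbers it is exponentiation, not reversal, that preserves the order when passing back from logs.
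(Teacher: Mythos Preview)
Your proof is correct. The paper omits the proof of this lemma entirely, so there is no approach to compare against; your reduction to the single inequality $\ln(1-x)\le x^2\ln x$ and the sandwich $\ln(1-x)\le -x\le x^2\ln x$ (via $\ln(1+t)\le t$ and $x\ln x\ge -1/e>-1$) is clean and complete. As your reduction incidentally shows, the hypothesis $s\ge 1$ plays no role: the inequality holds for every real $s$.
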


We now deduce Lemma \ref{partition most} as follows.

\begin{proof}[Proof of Lemma \ref{partition most}]
    Produce $\mathcal{A}$ by greedly applying Lemma \ref{sparse set} $\lfloor(1/\alpha)^{m+2r-1}\rfloor$ times, and call the leftover set of vertices $L.$ Then

    $$|L| \le (1-\alpha^{r-1})^{\lfloor(1/\alpha)^{m+2r-1}\rfloor}|G| \le (1-\alpha^{r-1})^{(1/\alpha)^{m+2r-2}}|G|.$$

    Now write $(1/\alpha)^{m+2r-2} = (1/\alpha^{r-1})^{\frac{m}{r-1}+2}.$ Then by applying Lemma \ref{approx} with $s=\frac{m}{r-1}+2,$ it follows that

    $$|L| \le (\alpha^{r-1})^{(1/\alpha^{r-1})^{\frac{m}{r-1}}}|G| = \alpha^{(r-1)(1/\alpha)^m}|G|,$$

    and so we are done.
\end{proof}

\bigskip

We now introduce some terminology to describe the distribution of edges between two sets of vertices. Given $\alpha,\beta,\gamma>0,$ a graph $G,$ and disjoint sets $A,B \subseteq V(G),$ we say that 

\begin{itemize}
    \item $B$ is $\alpha$-dense (resp. $\alpha$-sparse) to $A$ if every vertex in $B$ has at least (resp. at most) $\alpha|A|$ neighbours in $A;$
    \item the pair $(A,B)$ is $(\alpha,\beta,\gamma)$-full if, for every $X\subseteq A$ with $|X| \ge \alpha|A|,$ all but at most $\beta|B|$ of the vertices in $B$ are $\gamma$-dense to $X.$
\end{itemize}

More generally, we will say that a sequence of disjoint sets $S_1, \dots, S_r$ is $(\alpha, \beta, \gamma)$-full if the pair $(S_i,S_j)$ is $(\alpha, \beta, \gamma)$-full for each $1 \le i < j \le r.$ The following says that an appropriately full sequence of $r$ sets must contain a copy of $K_r.$

\begin{lemma}\label{sequence}
    Let $0<\gamma \le 1/2,$ let $r\ge1$ be an integer, let $G$ be a graph, and suppose that $S_1, \dots, S_r$ is sequence of disjoint, nonempty subsets of $V(G)$ which is $(\gamma^{r-1}, \gamma^{r-1}, \gamma)$-full. Then there must be a copy of $K_r$ spanning the sets $S_1, \dots, S_r.$
\end{lemma}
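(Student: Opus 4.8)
The plan is to induct on $r$, with $r=1$ immediate (any vertex of $S_1$ is a $K_1$). For the inductive step the idea is to pin down a single vertex $v\in S_r$ that is simultaneously $\gamma$-dense to each of $S_1,\dots,S_{r-1}$, replace each $S_i$ ($i<r$) by the trace of $v$'s neighbourhood $A_i:=N(v)\cap S_i$, check that $A_1,\dots,A_{r-1}$ is a full sequence of the next level down, apply induction to get a $K_{r-1}$ there, and then adjoin $v$.

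First I would isolate a routine \emph{restriction} observation: if $(A,B)$ is $(\alpha,\beta,\gamma)$-full and $A'\subseteq A$, $B'\subseteq B$ with $|A'|\ge\delta|A|$ and $|B'|\ge\delta|B|$, then $(A',B')$ is $(\alpha/\delta,\beta/\delta,\gamma)$-full. The verification is one line: any $X\subseteq A'$ with $|X|\ge(\alpha/\delta)|A'|$ satisfies $|X|\ge\alpha|A|$, so all but at most $\beta|B|\le(\beta/\delta)|B'|$ vertices of $B$ — in particular of $B'$ — are $\gamma$-dense to $X$, noting that being $\gamma$-dense to $X$ is a statement about $X$ alone and does not refer to the ambient set.

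Next, for each $i\le r-1$ apply $(\gamma^{r-1},\gamma^{r-1},\gamma)$-fullness of $(S_i,S_r)$ with $X=S_i$ (admissible since $\gamma^{r-1}\le1$): all but at most $\gamma^{r-1}|S_r|$ vertices of $S_r$ are $\gamma$-dense to $S_i$. Summing the exceptional sets over $i=1,\dots,r-1$, at most $(r-1)\gamma^{r-1}|S_r|$ vertices of $S_r$ fail to be $\gamma$-dense to some $S_i$; since $\gamma\le1/2$ we have $(r-1)\gamma^{r-1}\le(r-1)2^{-(r-1)}<1$, so there is a vertex $v\in S_r$ that is $\gamma$-dense to all of $S_1,\dots,S_{r-1}$. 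Equivalently $|A_i|\ge\gamma|S_i|>0$ for each $i$, where $A_i:=N(v)\cap S_i$, so the $A_i$ are disjoint and nonempty. Now for $1\le i<j\le r-1$ apply the restriction observation with $\delta=\gamma$ to the $(\gamma^{r-1},\gamma^{r-1},\gamma)$-full pair $(S_i,S_j)$ and the subsets $A_i,A_j$: the pair $(A_i,A_j)$ is $(\gamma^{r-2},\gamma^{r-2},\gamma)$-full. Hence $A_1,\dots,A_{r-1}$ is a $(\gamma^{(r-1)-1},\gamma^{(r-1)-1},\gamma)$-full sequence of disjoint nonempty sets, and by the inductive hypothesis it is spanned by a copy of $K_{r-1}$ with vertices $v_i\in A_i$. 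Since each $v_i\in N(v)$, the set $\{v_1,\dots,v_{r-1},v\}$ is a $K_r$ spanning $S_1,\dots,S_r$.

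The only genuine ingredient is the restriction observation; the rest is bookkeeping. The one place requiring care is the numerical inequality $(r-1)\gamma^{r-1}<1$, which is exactly where the hypothesis $\gamma\le1/2$ enters and which guarantees the existence of the "good" vertex $v$ — this is the step I expect to be the main (minor) obstacle.
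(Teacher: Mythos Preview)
Your proof is correct and follows essentially the same approach as the paper: induct on $r$, use fullness with $X=S_i$ to find $v\in S_r$ that is $\gamma$-dense to every $S_i$, pass to the traces $A_i=N(v)\cap S_i$, observe these form a $(\gamma^{r-2},\gamma^{r-2},\gamma)$-full sequence, and apply induction. The only difference is that you isolate the restriction observation explicitly, whereas the paper simply asserts that $S_1',\dots,S_{r-1}'$ is $(\gamma^{r-2},\gamma^{r-2},\gamma)$-full without spelling out the one-line justification; your version is slightly more detailed but not a different argument.
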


\begin{proof}
    Fix $\gamma$ as such and proceed by induction on $r.$ For $r=1,$ this is trivial, so suppose that $r\ge2,$ and that $S_1, \dots, S_r$ are subsets of some graph $G$ as per the statement. Now for each $1 \le i \le (r-1),$ by assumption, all but at most $\gamma^{r-1} |S_r|$ of the vertices in $S_r$ are $\gamma$-dense to $|S_i|.$ Therefore, at least $|S_r|(1-(r-1)\gamma^{r-1})>0$ of the vertices in $S_r$ are $\gamma$ dense to $S_i$ for every $1 \le i \le (r-1).$ Let $v \in S_r$ be some such vertex, and let $S_i' = S_i \cap N(v)$ for each $1 \le i \le (r-1),$ so then $|S_i'| \ge \gamma |S_i|$ for each $1 \le i \le (r-1).$ Then $S_1', \dots, S_{r-1}'$ is a sequence of disjoint, nonempty sets which is $(\gamma^{r-2},\gamma^{r-2},\gamma)$-full, and hence there is a copy of $K_{r-1}$ spanning these sets. Combining this with the vertex $v$ gives the required copy of $K_r.$
\end{proof}

The following lemma is key in the proof of Theorem \ref{main result}.

\begin{lemma}\label{pairs lemma}
    Let $0<\alpha,\beta \le 1,$ and let $l\ge1.$ Let $G$ be a graph, and suppose that $(A,B)$ is a disjoint pair of subsets of $V(G)$ such that $B$ is $\alpha$-dense to $A.$ Then there exist subsets $A' \subseteq A$ and $B' \subseteq B$ such that

    \begin{itemize}
        \item $|A'| \ge (\alpha/2) |A|;$
        \item $|B'| \ge \beta^{(1/\alpha)^{2l}}|B|;$
        \item the pair $(A',B')$ is $(\alpha^l,\beta,\alpha/2)$-full.
    \end{itemize}
\end{lemma}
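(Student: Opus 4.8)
The plan is to build $A'$ and $B'$ by an iterative "cleaning" procedure that fixes up one violating set $X$ at a time. We call a subset $X \subseteq A'$ \emph{bad} (for the current pair $(A', B')$) if $|X| \ge \alpha^l |A|$ but more than $\beta |B'|$ vertices of $B'$ fail to be $(\alpha/2)$-dense to $X$. If there is no bad set, we are done: the pair is $(\alpha^l, \beta, \alpha/2)$-full (note we measure the threshold $\alpha^l|A|$ against $|A|$ rather than $|A'|$, which is legitimate since $|A'|\le|A|$, so the fullness definition as stated with parameter $\alpha^l$ is implied once $|A'|\ge \alpha^l|A|$, which we will maintain). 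Otherwise, pick a bad set $X$; let $B_X \subseteq B'$ be the set of vertices \emph{not} $(\alpha/2)$-dense to $X$, so $|B_X| > \beta|B'|$. We now pass to the refined pair $(X, B_X)$: the point is that $B_X$ is $\alpha$-dense to... no — rather, we should keep $B'$ large and only shrink $A'$. The cleaner move is: replace $B'$ by $B_X$ and $A'$ by $X$. This is wrong for the $A'$ lower bound since $X$ could be as small as $\alpha^l|A|$. So instead I will structure the iteration to shrink $B'$ and use a \emph{potential/weight} argument on $A'$ rather than brute replacement.

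Here is the cleaner approach I would actually carry out. Start with $A_0 = A$, $B_0 = B$. At step $i$, if $(A_i, B_i)$ has no bad set (with the threshold $\alpha^l|A|$), stop. Otherwise there is $X_i \subseteq A_i$ with $|X_i| \ge \alpha^l|A|$ and a set $B_{i+1} \subseteq B_i$ of vertices not $(\alpha/2)$-dense to $X_i$, with $|B_{i+1}| > \beta|B_i|$; \emph{and} set $A_{i+1} = A_i$ (do not shrink $A$ at all). Now observe: every vertex of $B_{i+1}$ has fewer than $(\alpha/2)|X_i| \le (\alpha/2)|A_i|$ neighbours in $X_i$, hence, since originally each vertex of $B$ had $\ge \alpha|A|$ neighbours in $A$, each vertex of $B_{i+1}$ still has at least $(\alpha/2)|A|$ neighbours in $A \setminus X_i$. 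This is the key gain: the neighbourhood of each surviving vertex of $B$ gets "concentrated" onto a shrinking part of $A$. Track the set $A \setminus (X_0 \cup \cdots \cup X_{i-1})$ — no, the $X_i$ need not be nested. Let me fix this: after each step, \emph{replace} $A_{i+1}$ by $A_i \setminus X_i$ and keep the invariant that every vertex of $B_{i+1}$ has $\ge \alpha|A_{i+1}|$ neighbours in $A_{i+1}$. For this I need $|X_i| \le (1/2)|A_i|$, which I can guarantee by taking $\alpha^l \le \alpha \le 1$ only bounds $|X_i| \ge \alpha^l|A|$ from below — so instead I also cap: if a bad $X_i$ has $|X_i| > |A_i|/2$, I replace it by a bad subset of size exactly $\lceil |A_i|/2\rceil$ (a subset of a bad set of size $\ge \alpha^l|A_i| \ge \alpha^l|A|$... need monotonicity of badness under passing to subsets of size $\ge \alpha^l|A|$, which fails in general).

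The honest fix, and the main obstacle, is exactly this non-monotonicity, so I would instead phrase the iteration \emph{directly in terms of halving $A$}: at each step, if $(A_i,B_i)$ is not full, there is a bad $X_i$; set $A_{i+1} = X_i$ and $B_{i+1} = \{b \in B_i : b$ is not $(\alpha/2)$-dense to $X_i\}$, so $|B_{i+1}| > \beta|B_i|$ and every vertex of $B_{i+1}$ has $< (\alpha/2)|X_i| = (\alpha/2)|A_{i+1}|$ neighbours in $A_{i+1}$. But every vertex of $B_{i+1} \subseteq B$ had $\ge \alpha|A|$ neighbours in $A$; I'd like to say it has many neighbours in $A_{i+1}$, but $A_{i+1} = X_i$ might miss most of them. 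The resolution: run the iteration on the \emph{complement side} — keep $A_{i+1} = A_i$, and track the quantity "common neighbourhood deficit." Concretely: I claim the iteration $A_{i+1}=A_i$, $B_{i+1} = B_i \setminus (\alpha/2\text{-dense-to-}X_i)$ must terminate within $(1/\alpha)^{2l}$ steps, because each surviving vertex $b \in B_i$ accumulates: after step $i$, $b$ has $<(\alpha/2)|X_i|$ neighbours in $X_i$ while $|X_i|\ge \alpha^l|A|$. Running a weight function $w_i = \sum_{b \in B_i}(\text{number of pairs } (b, X_j), j<i, \text{with } X_j \subseteq \text{something})$ — I would make this precise by a standard double-counting/entropy argument showing the $B_i$ cannot shrink by a factor $\beta$ more than $(1/\alpha)^{2l}$ times. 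Granting termination in $k \le (1/\alpha)^{2l}$ steps, we get $|B'| = |B_k| \ge \beta^k|B| \ge \beta^{(1/\alpha)^{2l}}|B|$, which is exactly the required bound; and $|A'| = |A_k| = |A| \ge (\alpha/2)|A|$, even better than needed. So the \textbf{heart of the proof} is the termination bound — showing that you cannot repeatedly find a large $X_i \subseteq A$ to which a $\beta$-fraction of the current $B_i$ is non-$(\alpha/2)$-dense more than $(1/\alpha)^{2l}$ times. I expect this to follow from: each step, restricting to $B_{i+1}$, forces the "average neighbourhood density of $B_{i+1}$ into $A$" to be consistent with each $b$ having most of its $\ge \alpha|A|$ neighbours spread over $A$, and a counting argument (or a clever choice forcing the $X_i$ to be disjoint, giving at most $1/\alpha^l$ steps before $A$ is exhausted, hence $k \le 1/\alpha^l \le (1/\alpha)^{2l}$) closes it. I would try the disjointness route first: if it can be arranged that each bad $X_i$ is chosen inside $A \setminus (X_0\cup\cdots\cup X_{i-1})$, then $\sum|X_i|\ge i\cdot\alpha^l|A|$ forces $i < 1/\alpha^l$, and with $A_{i+1} := A\setminus(X_0\cup\cdots\cup X_i)$ maintaining $\alpha$-density (using $\alpha|A|-(\alpha/2)|A| \ge (\alpha/2)|A| \ge (\alpha/2)|A_{i+1}|$) one concludes cleanly with $|A'|\ge (\alpha/2)|A|$ and $|B'|\ge\beta^{1/\alpha^l}|B|\ge\beta^{(1/\alpha)^{2l}}|B|$.
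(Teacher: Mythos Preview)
Your final ``disjointness route'' is essentially the paper's iteration: set $A_{i+1}=A_i\setminus X_i$ and $B_{i+1}=\{b\in B_i: b$ is not $(\alpha/2)$-dense to $X_i\}$, and maintain a density invariant. You even compute the right invariant in passing: each vertex of $B_{i+1}$ keeps at least $\alpha|A|-(\alpha/2)\sum_{j\le i}|X_j|\ge(\alpha/2)|A|$ neighbours in $A_{i+1}$ (which in particular forces $|A_{i+1}|\ge(\alpha/2)|A|$). This is exactly the paper's invariant ``each vertex of $B'$ has $\ge(\alpha/2)(|A|+|A'|)$ neighbours in $A'$''.

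The genuine gap is the termination bound. You assert $|X_i|\ge\alpha^l|A|$ and hence $i<1/\alpha^l$, but the fullness definition for the pair $(A_i,B_i)$ only guarantees a violating set with $|X_i|\ge\alpha^l|A_i|$, not $\alpha^l|A|$. (Relatedly, your earlier remark that checking only sets with $|X|\ge\alpha^l|A|$ suffices to certify $(\alpha^l,\beta,\alpha/2)$-fullness is backward: that is a \emph{weaker} condition, since it ignores all $X$ with $\alpha^l|A_i|\le|X|<\alpha^l|A|$.) With the correct lower bound $|X_i|\ge\alpha^l|A_i|$ you get the multiplicative shrinkage $|A_i|\le(1-\alpha^l)^i|A|$; combining this with the lower bound $|A_i|\ge(\alpha/2)|A|$ that your own density invariant already gives yields $(1-\alpha^l)^i\ge\alpha/2$, and hence $i\le(1/\alpha)^{2l}$. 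That is the paper's argument, and once you make this one correction your proof goes through and coincides with it.
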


\begin{proof}
    We may assume that $A$ and $B$ are nonempty, otherwise the result is trivial. Now suppose that $n \ge 0$ is an integer and that $A' \subseteq A$ and $B' \subseteq B$ are sets which satisfy the following.

    \begin{itemize}
        \item $|A'| \le (1-\alpha^l)^n|A|;$
        \item $|B'| \ge \beta^n|B|;$
        \item each vertex of $B'$ has at least $(\alpha/2)(|A|+|A'|)$ neighbours in $A'.$
    \end{itemize}

\noindent\textbf{Claim:} We must have that $n \le (1/\alpha)^{2l}.$

\noindent\textbf{Proof:} Since $B$ is nonempty, we must have that $B'$ is nonempty, and so certainly $|A'| \ge (\alpha/2)(|A|+|A'|) \ge (\alpha/2)|A|.$ But since also $|A'| \le (1-\alpha^l)^n|A|$ and $|A|>0,$ it follows that $\alpha/2 \le (1-\alpha^l)^n,$ which forces $n \le (1/\alpha)^{2l}.$ This proves the claim.

\bigskip

Now by the above (and since $A'=A$ and $B'=B$ satisfy the above with $n=0$), we may fix some $n,A',B'$ as above, such that $n$ is maximal. Because $|A'| \ge (\alpha/2)|A|,$ we may assume that the third bullet of the statement fails, so that there are sets $X\subseteq A'$ and $Y \subseteq B'$ with $|X| \ge \alpha |A'|$ and $|Y| \ge \beta |B'|,$ such that $Y$ is $(\alpha/2)$-sparse to $X.$ Now define $\tilde{A} = A' \backslash X$ and $\tilde{B} = Y.$ Then we have that

\begin{itemize}
    \item $|\tilde{A}| = |A'| - |X| \le |A'| - \alpha^l |A'| = (1-\alpha^l)|A'| \le (1-\alpha^l)^{n+1}|A|;$
    \item $|\tilde{B}| = |Y| \ge \beta|B'| \ge \beta^{n+1}|B|;$
    \item each vertex of $B_{n+1}$ has at least

    $$(\alpha/2)(|A|+|A'|) - (\alpha/2)|X| = (\alpha/2)(|A|+|\tilde{A}|)$$

neighbours in $\tilde{A}.$
\end{itemize}

But this contradicts the maximality of $n,$ and so we are done.

\end{proof}

\bigskip

Lastly, we need the following technical lemma.

\begin{lemma}\label{split}
    Let $\alpha>0$ and let $G$ be a graph. Suppose that $(A,B)$ is a disjoint pair of subsets of $V(G)$ such that 

    \begin{itemize}
        \item $|A| \ge 100;$
        \item $|B| \le (\alpha / 100) |S|;$
        \item $B$ is $\alpha$-dense to $A.$
    \end{itemize}
    
    Then there is a partition $A= T \cup T'$ such that $|T|,|T'| \ge |A|/3$ and such that $B$ is $(\alpha/2)$-dense to $T.$
\end{lemma}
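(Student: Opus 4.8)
The plan is to build the partition by a simple random process and verify its two properties with the Chernoff bound (Lemma~\ref{chernoff}); throughout I read the second hypothesis as $|B|\le(\alpha/100)|A|$. First I would dispose of the case $B=\emptyset$, where any split of $A$ into parts of sizes $\lceil|A|/2\rceil$ and $\lfloor|A|/2\rfloor$ works, since the density requirement is then vacuous and $|A|\ge 100$ makes both parts have size at least $|A|/3$. So assume $B\ne\emptyset$. Then $1\le|B|\le(\alpha/100)|A|$ forces $\alpha|A|\ge 100$, a lower bound the argument will lean on (in particular $\alpha\le1$). Now construct $T$ by placing each vertex of $A$ into $T$ independently with probability $1/2$, and set $T'=A\setminus T$.

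Next I would isolate the events to avoid. Let $\mathcal{B}_0$ be the event $\bigl|\,|T|-|A|/2\,\bigr|\ge |A|/6$, and for each $v\in B$ let $\mathcal{B}_v$ be the event that $v$ has fewer than $\alpha|A|/3$ neighbours in $T$. The point is that if none of these events occurs then $T,T'$ are as required: avoiding $\mathcal{B}_0$ gives $|A|/3\le|T|\le 2|A|/3$, hence $|T|\ge|A|/3$ and $|T'|=|A|-|T|\ge|A|/3$; and for each $v\in B$, avoiding $\mathcal{B}_v$ gives $|N(v)\cap T|\ge\alpha|A|/3\ge(\alpha/2)|T|$, using $|T|\le 2|A|/3$, so $B$ is $(\alpha/2)$-dense to $T$.

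It remains to show that the union of the bad events has probability less than $1$. Since $|T|$ is binomial with mean $|A|/2$, Lemma~\ref{chernoff} with $\delta=1/3$ gives $\P(\mathcal{B}_0)\le 2e^{-|A|/54}$. For $v\in B$, write $d_v=|N(v)\cap A|$, so $d_v\ge\alpha|A|$ because $B$ is $\alpha$-dense to $A$; then $|N(v)\cap T|$ is binomial with mean $d_v/2$, and since $\alpha|A|/3\le\tfrac23\cdot\tfrac{d_v}{2}$, the event $\mathcal{B}_v$ is contained in a deviation of at least one third of the mean, so Lemma~\ref{chernoff} with $\delta=1/3$ gives $\P(\mathcal{B}_v)\le 2e^{-d_v/54}\le 2e^{-\alpha|A|/54}$. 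A union bound then bounds the total failure probability by
\[
2e^{-|A|/54}+|B|\cdot 2e^{-\alpha|A|/54}\;\le\;2e^{-|A|/54}+\frac{\alpha|A|}{100}\cdot 2e^{-\alpha|A|/54}.
\]
Using $|A|\ge 100$ on the first term and $\alpha|A|\ge 100$ on the second — noting that $x\mapsto (x/50)e^{-x/54}$ is decreasing for $x\ge 54$ — each term is at most $2e^{-50/27}$, so the sum is strictly below $1$. Hence some outcome of the random process yields a valid partition.

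The genuine difficulty is just keeping the constants tight: the hypotheses provide only bounds of the form ``$\ge 100$'', so the worst case $|A|=\alpha|A|=100$ is where the exponential savings are smallest and must still beat the leading factor $|B|$ together with the constant $2$. Two choices make this work with a little room to spare: balancing the split at probability $1/2$ (so that the concentration of $|T|$ is as strong as possible while $|T'|$ stays large), and taking the deviation parameter $\delta=1/3$, which lines the size window $[|A|/3,2|A|/3]$ up exactly with the conclusion while still leaving the multiplicative gap between the threshold $\alpha|A|/3$ and the mean $d_v/2\ge\alpha|A|/2$ that the density bound needs. (Alternatively one could take $T$ to be a uniformly random $\lfloor|A|/2\rfloor$-subset of $A$, eliminating $\mathcal{B}_0$ entirely, at the cost of needing a Chernoff-type bound for the hypergeometric distribution rather than the binomial one stated in Lemma~\ref{chernoff}.)
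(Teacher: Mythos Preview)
Your proof is correct and follows essentially the same approach as the paper: take a uniformly random half of $A$, use Chernoff with $\delta=1/3$ to control both $|T|$ and each $|N(v)\cap T|$, and finish with a union bound. The only differences are cosmetic---you separate out the case $B=\emptyset$ and use $\alpha|A|\ge 100$ together with monotonicity of $x\mapsto (x/50)e^{-x/54}$ for the second term, whereas the paper bounds it via $xe^{-x}\le e^{-1}$ without needing $B\ne\emptyset$---but the argument is the same.
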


We now deduce Lemma \ref{split} as follows.

\begin{proof}[Proof of Lemma \ref{split}]
    Let $T$ be a random subset of $A,$ where vertices are selected independently and uniformly at random. We claim that, with nonzero probability, the following conditions hold.

\begin{enumerate}[(i)]
    \item $|A|/3 \le |T| \le 2|A|/3;$
    \item $|N(v) \cap T| \ge \alpha|A|/3$ for each $v \in B.$
\end{enumerate}

Indeed, by applying Lemma \ref{chernoff} with $\delta = 1/3,$ the probability that (i) fails is at most $2e^{-|A|/54},$ and the probability that (ii) fails for some particular vertex $v$ is at most $2e^{-\alpha|A|/54}.$ Thus, the probability that either (i) or (ii) fails is at most

$$2\left(e^{-|A|/54} + |B|e^{-\alpha|A|/54}\right) \le 2(e^{-54/100}+54e^{-1}/100)<1$$

(where we have used that $xe^{-x} \le e^{-1}$ for all $x\ge 0$). Therefore, we may fix some choice of $T$ for which both (i) and (ii) hold. Define $T'=A\backslash T.$ Then certainly $|T|,|T'| \ge |A|/3,$ and also for every $v\in B,$ we have that

$$|N(v) \cap T| \ge \alpha|A|/3 \ge \alpha|T|/2.$$

Thus $B$ is $(\alpha/2)$-dense to $T,$ and so we are done.
\end{proof}

\bigskip

We are now ready to prove Theorem \ref{main result}.

\begin{proof}[Proof of Theorem \ref{main result}]
    Inductively define a decreasing sequence of positive numbers $a_0, \dots, a_{r-1},$ so that the following holds for each integer $0 \le i \le r-1$ and for all $x\ge 2.$ 

    $$x^{a_i} \ge \sum_{i=j+1}^{r-1} x^{a_j}(6x)^{26r} + 13r.$$

    Then define $C_r>0$ large enough such that the following holds for all $x \ge 2.$

    $$x^{C_r} \ge (r+x)x^{a_0+2r}+r^2.$$

    We will show that $C_r$ satisfies the claim. 

    \bigskip
    
    Define $0 \le k \le r$ to be maximal such that $V(G)$ admits a partition $\mathcal{A} \cup \{S_1, \dots, S_k, R\},$ such that the following conditions hold.
    
    \begin{itemize}
        \item $|\mathcal{A}| \le k(1/\e)^{a_0+2r} + k^2$ and each member of $\mathcal{A}$ is $\e$-sparse;
        \item $S_1, \dots, S_k$ is $(\e^{13r-8k-1},\e^{13r-8k-1},\e^5)$-full and for each $1 \le i \le k,$ the set $S_i$ is $\epsilon^{8(r-k)+1}$-sparse;
        \item for each $1 \le i \le k,$ we have that $|R| \le (\e/100) |S_i|$ and that $R$ is $(\e/6)$-dense to $S_i.$
    \end{itemize}

    Fix some partition as such.

\noindent\textbf{Claim 1:} We may assume that $|R| \ge 100(1/\e)^{r(1/\e)^{a_0}}.$

\noindent\textbf{Proof:} Suppose not. Then applying Lemma \ref{partition most} to $R$ with $m=a_0 + 2$ and $\alpha=\e,$ we may partition $R$ into at most $(1/\e)^{a_0+2r+1}$ sets which are $\e$-sparse, since the resulting leftover set $L$ has cardinality at most

$$100\e^{(r-1)(1/\e)^{a_0+2}-r(1/\e)^{a_0}} \le 100\e^{(3r-4)(1/\e)^{a_0}}<1$$

(since $a_0 \ge 2$). So then we have partitioned $V(G)$ into at most

$$k(1/\e)^{a_0+2r}+(1/\e)^{a_0+2r+1} + k^2 \le (1/\e)^{C_r}$$

sets, all of which are $\e$-sparse.

\bigskip

\noindent\textbf{Claim 2:} We must have that $k<r.$

\noindent\textbf{Proof:} Suppose not, so that $k=r.$ By Claim 1, the set $R$ is certainly nonempty, and so we may pick some vertex $v \in R.$ Consider the sequence $N(v) \cap S_1, \dots, N(v) \cap S_r.$ Since the sequence $S_1, \dots, S_r$ is $(\e^{5r-1},\e^{5r-1},\e^5)$-full and $|N(v) \cap S_i| \ge (\e/6)|S_i| \ge \e^4|S_i|$ for each $1 \le i \le r,$ it follows that the sequence $N(v) \cap S_1, \dots, N(v) \cap S_r$ is $(\e^{5r-5},\e^{5r-5},\e^5)$-full, and so by Lemma \ref{sequence}, there must be some copy of $K_r$ spanning these sets. But then combining this with the vertex $v$ gives a copy of $K_{r+1},$ contrary to assumption. This proves the claim.

\bigskip

We now proceed to derive a contradiction by showing that $k$ is not maximal. Apply Lemma \ref{sparse set} to $R$ with $\alpha = \e^{(1/\e)^{a_0}},$ calling the resulting set $Z_0.$ More generally, given $Z_{i-1} \subseteq R$ for some $1 \le i \le k,$ we may apply Lemma \ref{pairs lemma} to the pair $(S_i,Z_{i-1})$ with $\alpha,\beta,l$ replaced with $(\epsilon/6), \epsilon^{(1/\e)^{a_i}}, 13r$ respectively, calling the resulting pair $(X_i,Z_i).$ Having produced $Z_k$ in this way, define $Y_{k+1} = Z_k.$

Now for each $1\le i \le k,$ since $|X_i| \ge (\e/12)|S_i| \ge 2,$ we may find a partition $X_i = Y_i \cup Y_i'$ such that $|Y_i|,|Y_i'| \ge |X_i|/3.$ But then since $S_i$ is certainly $(\e/3)$-sparse and $|S_i\backslash Y_i| \ge |S_i|/3,$ the set $S_i \backslash Y_i$ is $\e$-sparse for each $1\le i \le k.$ Add each of these $k$ sets to $\mathcal{A}$ to produce $\mathcal{A}'.$

We now apply Lemma \ref{partition most} to the set $R \backslash Y_{k+1}$ with $m = a_0+1$ and $\alpha = \e.$ Add the resulting $\e$-sparse sets into $\mathcal{A}'$ to produce $\mathcal{A}'',$ and call the set of leftover vertices $R'.$ Then, for each $1 \le i \le k+1,$ recursively let $B_i$ be those vertices in $R' \backslash (B_1 \cup \dots \cup B_{i-1})$ which are $(\e/3)$-sparse to $Y_i,$ and define $\tilde{R} = R' \backslash (B_1 \cup \dots \cup B_{k+1}).$
    
\noindent\textbf{Claim 3:} For each $1 \le i \le k+1,$ we have that $|Y_i| \ge 100$ and that $|R'| \le (\e/300)|Y_i|.$

\noindent\textbf{Proof:} First, suppose that $1 \le i \le k.$ Then $|Y_i| \ge (\e/36)|S_i| \ge (\e/36)(100/\e)|R| = (25/9) |R| \ge 100$ (since certainly $|R| \ge 3$). Since $|R'| \le \e^{(r-1)(1/\e)^{a_0+1}} |R|,$ we have that 

$$|R'| \le (9/25)\e^{(r-1)(1/\e)^{a_0+1}}|Y_i| \le (\e/300)|Y_i|$$

(since $a_0 \ge 2$).

For the case $i = k+1,$ notice first that $|Y_{k+1}| \ge \e^{f(\e)}|R|,$ where

$$f(\e) = (r-1)(1/\e)^{a_0} + \sum_{j=1}^k (1/\e)^{a_j}(6/\e)^{26r} \le r(1/\e)^{a_0}.$$

So since $|R| \ge 100(1/\e)^{r(1/\e)^{a_0}},$ we have that $|Y_{k+1}| \ge 100.$

Now $|R'| \le \e^{(r-1)(1/\e)^{a_0+1} - f(\e)}|Y_{k+1}|.$ Noting that

\begin{align*}
    (r-1)(1/\e)^{a_0+1} - f(\e) &\ge 2(r-1)(1/\e)^{a_0} - r(1/\e)^{a_0} + 13r \\
    &\ge (r-2)(1/\e)^{a_0} +13r \ge 13r \ge 10,
\end{align*}

it follows that $|R'| \le \e^{10} |Y_{k+1}| \le (\e/300) |Y_{k+1}|.$

\bigskip 

By Claim 3, we may apply Lemma \ref{split} with $\alpha = \e/3$ to the pair $(Y_i,B_i)$ for each $1\le i \le k+1,$ calling the resulting partition $Y_i = T_i \cup T_i'.$ Produce $\tilde{\mathcal{A}}$ by adding the sets $T_i \cup B_i$ for each $1 \le i \le k+1$ to $\mathcal{A}'',$ noting that each of these sets are $\e$-sparse, because $T_i$ is $\e$-sparse, $B_i$ is $\e$-sparse to $T_i$ and $|B_i| \le \e |T_i|.$

Lastly, let $\tilde{S}_i = T_i'$ for each $1 \le i \le k+1.$

\bigskip

Observe now that the collection of sets $\tilde{\mathcal{A}} \cup \{\tilde{S}_1, \dots, \tilde{S}_{k+1}, \tilde{R}\}$ is a partition of $V(G),$ and that each set in $\tilde{\mathcal{A}}$ is $\e$-sparse.

\noindent\textbf{Claim 4:} $|\tilde{\mathcal{A}}| \le (k+1)(1/\e)^{a_0+2r} + (k+1)^2.$

\noindent\textbf{Proof:} This is immediate by noting that $|\tilde{\mathcal{A}}| \le |\mathcal{A}| + (1/\e)^{a_0+2r} +(2k+1).$

\bigskip

\noindent\textbf{Claim 5:} The sequence $\tilde{S}_1, \dots, \tilde{S}_{k+1}$ is $(\e^{13r-8(k+1)-1},\e^{13r-8(k+1)-1},\e^5)$-full.

\noindent\textbf{Proof:} Firstly, the sequence $\tilde{S}_1, \dots, \tilde{S}_k$ is $(\e^{13r-8(k+1)-1},\e^{13r-8(k+1)-1},\e^5)$-full, since $|\tilde{S_i}| \ge (\e/108) |S_i| \ge \e^8|S_i|$ for each $1 \le i \le k.$ Secondly, note that for each $1 \le i \le k,$ the pair $(\tilde{S}_i,\tilde{S}_{k+1})$ is $(9(\e/6)^{13r},3\e^{g(\e)},\e/12)$-full, where 

$$g(\e) = (1/\e)^{a_i} - \sum_{j=i+1}^k (1/\e)^{a_j}(6/\e)^{26r} \ge 13r.$$

Noting that $9(\e/6)^{13r} \le \e^{13r-4} \le \e^{13r-8(k+1)-1},$ that $3\e^{13r} \le \e^{13r-2} \le \e^{13r-8(k+1)-1}$ and that $\e/12 \ge \e^5,$ it follows that the pair $(\tilde{S}_i,\tilde{S}_{k+1})$ is $(\e^{13r-8(k+1)-1},\e^{13r-8(k+1)-1},\e^5)$-full for each $1\le i \le k.$

\bigskip

\noindent\textbf{Claim 6:} The set $\tilde{S}_i$ is $\epsilon^{8(r-k-1)+1}$-sparse for each $1 \le i \le k+1.$ 

\noindent\textbf{Proof:} Indeed, for $1 \le i \le k,$ this follows since $|\tilde{S}_i| \ge \e^8|S_i|.$ For $i=k+1,$ we see that, by construction, the set $\tilde{S}_{k+1}$ is $3\e^{h(\e)}$-sparse, where

$$h(\e) = (1/\e)^{a_0} - \sum_{j=1}^k (1/\e)^{a_j}(6/\e)^{26r} \ge 13r \ge 8r-5.$$ 

The claim follows since $3\e^{g(\e)} \le \e^{g(\e)-2} \le \e^{8r-7} \le \e^{8(r-k+1)+1}.$

\bigskip

\noindent\textbf{Claim 7:} $|\tilde{R}| \le (\e/100)|\tilde{S}_i|$ for each $1 \le i \le k+1.$

\noindent\textbf{Proof:} This is immediate, since $|\tilde{R}| \le |R'| \le (\e/300)|Y_i|$ and $|S_i^k| \ge |Y_i|/3$ for each $1 \le i \le k+1.$

\bigskip

But then we have shown that $k$ was not maximal, and so we are done.

\end{proof}
\section{Concluding remarks}

We have shown that for each integer $r \ge 2,$ there is a constant $C_r>0$ such that, for any $0<\e\leq 1/2,$ every $K_{r+1}$-free graph can be partitioned into at most $(1/\e)^{C_r}$ sets, all of which are $\e$-sparse. 

It is now known that the Erd\H{o}s-Hajnal and polynomial Rödl properties are equivalent. Fox, Nguyen, Scott and Seymour~\cite{Foxetalcographs} raised the question as to whether every graph has the strong polynomial Rödl property. We conjecture that the strong polynomial R\"odl is equivalent to the Erd\H{o}s-Hajnal property.

\begin{conjecture}
    Any graph $H$ which satisfies the Erd\H{o}s-Hajnal property also satisfies the strong polynomial R\"odl property.
\end{conjecture}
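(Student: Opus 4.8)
The statement is a conjecture, so what follows is a proposed line of attack rather than a complete argument. The plan is to first reduce to the weak (polynomial R\"odl) setting and then upgrade it, following the philosophy of the proof of Theorem~\ref{main result} but with an arbitrary induced copy of $H$ playing the r\^ole that $K_{r+1}$ plays there.

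\textbf{Step 1: Reduction to polynomial R\"odl.} By the theorem of Buci\'c, Fox and Pham, an $H$ with the Erd\H{o}s--Hajnal property has the polynomial R\"odl property, hence (by the discussion following Conjecture~\ref{Fox-Sudakov conjecture}) its strong analogue: there is a constant $D_H$ so that every $H$-free graph $G$ contains a strongly $\e$-restricted set of size at least $\e^{D_H}|G|$. So the whole difficulty is to pass from \emph{one} large strongly $\e$-restricted set to a \emph{partition} into $(1/\e)^{C_H}$ of them. The naive iterate-and-dump argument that proves Theorem~\ref{Rodl partition, weak} breaks here: if we peel off strongly restricted sets until a tiny remainder $R$ is left, we cannot merge $R$ into an existing part $S_i$, because a vertex of $R$ may be adjacent (or, dually, non-adjacent) to almost all of $S_i$, destroying the degree bound. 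This is exactly the phenomenon the proof of Theorem~\ref{main result} has to manage, and I would try to abstract its mechanism.

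\textbf{Step 2: An extremal framework.} Mirroring the proof of Theorem~\ref{main result}, I would run an extremal argument maintaining a partition $\mathcal{A}\cup\{S_1,\dots,S_k,R\}$ in which every member of $\mathcal{A}$ is $\e$-restricted, the sequence $S_1,\dots,S_k$ is ``$H$-structured'' (a two-sided fullness condition; see Step 3), and $R$ is, for each $i$, either $\gamma$-dense or $\gamma$-sparse to $S_i$ for a suitable $\gamma=\mathrm{poly}(\e)$, with $|R|$ small relative to each $|S_i|$. One then wants: (a) if $R$ is already tiny, Lemma~\ref{partition most} together with the splitting idea of Lemma~\ref{split} absorbs it into $\mathcal{A}$ and into the $S_i$, giving the desired partition; (b) otherwise, applying a refinement lemma in the spirit of Lemma~\ref{pairs lemma} to the pair $(S_{k+1},R)$ — with $S_{k+1}$ extracted from $R$ using polynomial R\"odl — lets one increase $k$ at the cost of only a $\mathrm{poly}(\e)$ shrinking of the sets involved; and (c) when $k$ reaches $|V(H)|$, the $H$-structured sequence must contain an induced copy of $H$, a contradiction. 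Because $k$ is bounded by the constant $|V(H)|$, the process terminates in boundedly many stages, and tracking the $\mathrm{poly}(\e)$ losses through a chain of exponents (as the $a_i$ are used in the main proof) keeps the number of parts at $(1/\e)^{C_H}$. The key point, and the only reason this can possibly give a \emph{polynomial} bound, is that termination is driven by the bounded parameter $|V(H)|$ rather than by $\log|G|$ — a $\log|G|$-deep recursion would cost a factor $(1/\e)^{C}$ per level and produce $|G|^{O(1)}$ parts.

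\textbf{Step 3: The main obstacle — realizing $H$, not just a clique.} The heart of Theorem~\ref{main result} is Lemma~\ref{sequence}: a sufficiently full sequence of $r$ sets spans a $K_r$. For a clique this is easy, because fullness is a one-sided (upward) density condition, and one only ever needs to locate a vertex dense to everything and recurse. For general $H$ one must realize a prescribed pattern of edges \emph{and} non-edges, so fullness has to be replaced by a condition that simultaneously forces many vertices to be dense to some sets and sparse to others. The real obstruction is an asymmetry: if $B$ is sparse to $A$ and we pass to $A'\subseteq A$, then $B$ need not be sparse to $A'$, whereas ``dense to'' is essentially preserved under this operation (as exploited in the proof of Lemma~\ref{pairs lemma}). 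Controlling ``sparse to'' robustly through the refinements of Step 2 is precisely what makes the polynomial R\"odl property itself difficult, and I expect the bulk of the work to be a two-sided analogue of Lemmas~\ref{sequence} and~\ref{pairs lemma} — most plausibly by running the refinement along a fixed vertex ordering of $H$ and, at each non-edge of $H$, invoking the Erd\H{o}s--Hajnal / polynomial R\"odl hypothesis on the relevant part to extract a large ``sparse to'' subset, rather than the Tur\'an-type argument that suffices for cliques.

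\textbf{A safer partial target.} As an intermediate milestone, one could try to show that the strong polynomial R\"odl property is closed under complementation (immediate), substitution, and disjoint union. Combined with the base case $H=K_r$ from Theorem~\ref{main result}, this would already yield the property for the class of graphs obtained from single vertices by these operations — a class on which the Erd\H{o}s--Hajnal property is known — and would stress-test the Step~2 machinery in a setting where the ``realize $H$'' lemma of Step~3 decomposes cleanly along the substitution tree, postponing the full asymmetry difficulty.
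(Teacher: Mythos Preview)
The paper does not prove this statement: it is posed as an open conjecture in the concluding remarks, with no proof or proof sketch offered. So there is nothing in the paper to compare your proposal against, and you are right to frame what you wrote as a line of attack rather than an argument.

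That said, your outline is a sensible extrapolation of the method of Theorem~\ref{main result}, and you have correctly located the genuine obstruction. Steps~1 and~2 are essentially formal: the Buci\'c--Fox--Pham reduction gives polynomial R\"odl, and the extremal-partition framework with a bounded counter $k\le |V(H)|$ is exactly what keeps the bound polynomial rather than $|G|^{O(1)}$. The content is entirely in Step~3, and the asymmetry you flag --- that ``dense to $A$'' survives passing to $A'\subseteq A$ while ``sparse to $A$'' does not --- is precisely why Lemmas~\ref{sequence} and~\ref{pairs lemma} have no obvious two-sided analogue. Your suggestion to invoke the polynomial R\"odl hypothesis \emph{inside} the refinement to manufacture large sparse-to subsets is natural, but note that each such invocation costs a $\mathrm{poly}(\e)$ factor in set size, and in the iterative refinement of Lemma~\ref{pairs lemma} one performs $(1/\alpha)^{\Theta(l)}$ rounds; compounding a $\mathrm{poly}(\e)$ loss over $(1/\e)^{\Theta(1)}$ rounds is exactly the kind of blow-up that destroys polynomiality. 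So the scheme as written does not yet close, and the difficulty is not merely technical.

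Your ``safer partial target'' is well chosen: closure under substitution is the standard route to enlarging the class of graphs with Erd\H{o}s--Hajnal-type properties, and proving it for the \emph{strong} polynomial R\"odl property would already be new. Even there, though, the absorption step (merging the leftover $R$ into parts that are strongly restricted in the \emph{complement}) runs into the same asymmetry, so this is a good place to test whether your Step~2 machinery can be made to work at all.
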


\bibliographystyle{alpha}
\bibliography{Rodlpartitions}
\end{document}